\newtheorem{theo}{Theorem}[section]
\newtheorem{cor}[theo]{Corollary}
\newtheorem{lem}[theo]{Lemma}
\newtheorem{prop}[theo]{Proposition}
\newtheorem{defn}[theo]{Definition}
\theoremstyle{definition}
\newtheorem{rem}[theo]{Remark}
\def\a{{\a}}
\def\a{{\mathfrak a}}
\def\C{\mathbb C}
\def\E{\mathbb E}
\def\N{\mathbb N}
\def\P{\mathbb{P}}
\def\Q{\mathbb Q}
\def\R{\mathbb R}
\def\rad{\operatorname{rad}}
\def\Z{\mathbb{Z}}
\DeclareMathOperator{\Ad}{Ad} 
\DeclareMathOperator{\Span}{span}
\DeclareMathOperator{\Map}{Map}
\DeclareMathOperator{\ch}{ch}
\begin{document} 

\title[Kirillov-Frenkel character formula for loop groups]{Kirillov-Frenkel character formula for  Loop groups, radial part and Brownian sheet}

\author{Manon Defosseux}
\address{Laboratoire de Math\'ematiques Appliqu\'ees \`a Paris 5, Universit\'e Paris 5, 45 rue des  Saints P\`eres, 75270 Paris Cedex 06.}
\email{manon.defosseux@parisdescartes.fr}
\maketitle
  \epigraph{Space is a swarming in the eyes, and Time a singing in the ears.}{Vladimir Nabokov, \textit{Ada or Ardor: A Family Chronicle}}   
This paper is dedicated to Professor Philippe Bougerol to thank him for many  discussions which have been a great source of joy and inspiration to me, and a valuable aid especially in the work presented here.

\begin{abstract}  We consider the coadjoint action of a Loop group of a compact group on the dual of the corresponding centrally extended Loop algebra and prove that a Brownian motion in a Cartan subalgebra conditioned to remain in an affine Weyl chamber - which can be seen as a space time conditioned Brownian motion - is distributed as the radial part process of a Brownian sheet on the underlying Lie algebra.

\end{abstract}
\section{Introduction} 
It is a famous result that a real Brownian motion conditioned in Doob's sense  to remain positive, is distributed as a Bessel $3$ process, i.e. as the radial process of a $3$-dimensional Brownian motion. More generally, if one considers the adjoint action of a compact Lie group on its Lie algebra, the radial part process of a Brownian motion in the Lie algebra is distributed as  a Brownian motion in a Cartan subalgebra  conditioned  to remain in a Weyl chamber. 
In this paper, we consider the coadjoint action of a Loop group of a compact group on the dual of the corresponding centrally extended Loop algebra and prove that a Brownian motion in a Cartan subalgebra conditioned to remain in an affine Weyl chamber - which can be seen as a space time conditioned Brownian motion - is distributed as the radial part process of a Brownian sheet on the underlying Lie algebra.

Let us be more precise. Let $K$ be a connected compact Lie group, $\mathfrak k$ its Lie algebra, and   $\mathfrak t$ a Cartan subalgebra. One considers  a   Weyl chamber in $\mathfrak t$. Then the orbits of $\mathfrak k$ under the adjoint action of $K$   are parametrized by the Weyl  chamber. Actually for any $x\in \mathfrak k$, it exists a unique vector in the Weyl chamber which is in the same orbit as $x$. This vector is called the radial part of $x$. The Lie algebra $\mathfrak k$ is equipped with an $\Ad(K)$-invariant scalar product, whose restriction to the Cartan subalgebra is invariant for the action of the Weyl group. If we consider the radial part process of  a standard Brownian motion  on $\mathfrak k$, it is a classical fact that this process is distributed as its projection on the Cartan subalgebra $\mathfrak t$ - which is a Brownian motion on $\mathfrak t$ - conditioned in Doob's sense to remain forever in the Weyl chamber. The Kirillov's character formula is at the heart of the connection between the two processes. 

An affine Lie algebra  can be realized as a central extension of a loop algebra $L\mathfrak k$.  Considering the coadjoint action of the  loop group $LK$ on the dual of the centrally extended loop algebra,  one defines the radial part of an element of this dual algebra (see Pressley and Segal \cite{Segal}). Frenkel established in \cite{Frenkel}  a Kirillov character type formula in the framework of  affine Lie algebras considering a Gaussian measure on the dual of $L\mathfrak k$ - basically a Brownian motion on $\mathfrak k$. In his approach, the conditional law of a Brownian motion on $\mathfrak k$ given its radial part provides a natural measure on the corresponding orbit under the action of $LK$.  It highly suggested that one could construct a process on a loop algebra, whose the radial part process would be distributed as a projection on a Cartan subalgebra,  conditioned to remain in an affine Weyl chamber. 

The affine Weyl chamber is a fundamental domain for the action of the Weyl group on the Tits cone. The first difficulty is that there is no Euclidean structure on the Cartan subalgebra of an affine Lie algebra, which would be invariant for the action  the Weyl group. So there is no natural Brownian motion to consider on it.  The Kirillov's orbit  method, on which we based our intuition, suggests a connection between coadjoint orbits for the action of a Loop group on the dual of an affine Lie algebra and irreducible representations of the affine Lie algebra.  Tensor product of irreducible representations of an affine Lie algebra makes appear a drift in the direction of the fundamental weight $\Lambda_0$. The idea is to consider a process with such a drift in the direction of  $\Lambda_0$ - which can be seen as a time component - living in an affine Lie algebra. In this paper, we construct such a process, considering a Brownian sheet on the Lie algebra  $\mathfrak k$ and prove that the corresponding radial  part process   is distributed as a projection on a Cartan subalgebra, conditioned to remain forever in an affine Weyl chamber. 

The paper is organized as follows.   In section \ref{Action orbits}  we describe the orbits for the coadjoint action of a Loop group of  a compact group on the dual of the associated centrally extended loop algebra. In particular we define a notion of radial part for this action, which is  suitable for our context. In section \ref{affine}   we briefly recall the necessary background on representation theory of affine Lie algebras.   Sections \ref{cond-law-end-point} and \ref{Kirillov-Frenkel} are basically a reformulation of the main results of \cite{Frenkel}. In section \ref{cond-law-end-point} we compute the  conditional law of a Brownian motion indexed by $[0,1]$  given the end point of its stochastic exponentiel. In section \ref{Kirillov-Frenkel} we recall how Frenkel proves that this conditional law leads to a Kirillov character formula for affine Lie algebras.    In section \ref{cond-affine} we introduce a Brownian motion on a Cartan subalgebra of an affine Lie algebra conditioned - in Doob's sense - to remain forever in an affine Weyl chamber.  We prove in section \ref{cond-rad} that this  conditioned Doob process has the same law as the radial part process of a Brownian sheet on $\mathfrak k$.


\section{Action of Loop group and its orbits}\label{Action orbits}
 \paragraph{\bf Loop group and its action} The following presentation is largely inspired by the one given in \cite{Segal}. Let $K$ be a connected, simply connected, compact Lie group and $\mathfrak{k}$ its Lie algebra. By compactness, without loss of generality, we suppose that $K$ is a matrix Lie group. The adjoint action of $K$ on itself, which is denoted by $\mbox{Ad}$,  is defined by $\mbox{Ad}(k)(u)=kuk^*$, $k,u\in K$. The induced adjoint action of  $K$ on its Lie algebra $\mathfrak{k}$ is still denoted by $\mbox{Ad}$ and is defined by $\mbox{Ad}(k)(x)=kxk^*$, $k\in K$, $x\in \mathfrak{k}$. The   Lie bracket on $\mathfrak{k}$ is denoted by $[.,.]_\mathfrak{k}$. We equipp $\mathfrak k$ with an $\mbox{Ad}(K)-$invariant inner product $(.,.)$, i.e.   the negative of the Killing form.  We denote by $e$ the identity of $K$. We consider the group $\Map([0,1],K)$ of  Borel measurable  maps from $[0,1]$ to $K$, the group law being pointwise composition, and    
the subgroup $LK$  of smooth loops from $[0,1]$ to $K$,  
$$LK=\{ f:[0,1]\to K,\,  f\textrm{ is smooth } ,\, f(0)=f(1)\}.$$
The Lie algebra $L\mathfrak{k}$ over $\R$ of $LK$ is the loop algebra  of smooth loops  from to $[0,1]$ to $\mathfrak k$. We define a centrally extended Lie algebra $L\mathfrak k\oplus \mathbb Rc$ equipped with a Lie bracket given by 
$$[\xi+\lambda c,\eta+\mu c]=[\xi,\eta]_\mathfrak{k}+\omega(\xi,\eta)c,$$ 
for $\xi,\eta\in L\mathfrak k$, $\lambda,\mu\in \R$, where $\omega$ is the cocycle defined by $\omega(\xi,\eta)=\int_0^{1}(\xi'(t),\eta(t))\, dt,$ and $[\xi,\eta]_\mathfrak{k}$ is defined pointwise. A Cartan subalgebra of the extended Loop algebra is $\mathfrak t\oplus \R c$, where $\mathfrak t$ is identified with the set of $\mathfrak t$-valued constant loops.  The Lie bracket   actually defines a Lie algebra action of $L\mathfrak k$ on $L\mathfrak k \oplus \R c$ given by 
$$\xi.(\eta+\mu c)=[\xi,\eta]+\omega(\xi,\eta)c,$$
for any $\xi\in L\mathfrak k,(\eta,\mu)\in L\mathfrak k\times \R$. This action comes from the adjoint action of $LK$ on  $L\mathfrak k \oplus \R c$ defined by 
\begin{align}\label{adjoint}
 \gamma.(\eta+\lambda c)=\mbox{Ad}(\gamma)(\eta)+\big(\lambda +\int_0^{1}(\gamma_s^{-1}\gamma_s',\eta_s)\, ds\big)c,
\end{align}
for any $\gamma\in L K,(\eta,\lambda)\in L\mathfrak k\times \R$.  
The corresponding coadjoint action of $LK$ on   the algebraic dual $(L\mathfrak{k}\oplus \R c)^*=(L\mathfrak{k})^*\oplus \R\Lambda_0$, where $\Lambda_0(L\mathfrak{k})=0$ and $\Lambda_0(c)=1$, is defined by
\begin{align}\label{coadjoint} \gamma.(\phi+\lambda \Lambda_0) =[Ad^*(\gamma)\phi-\lambda\int_0^{1}(\gamma_s'\gamma_s^{-1},.)\, ds]+\lambda\Lambda_0,
\end{align}
for any $\phi\in (L\mathfrak k)^*, \lambda\in \R$,
where $\int_0^{1}(\gamma_s'\gamma_s^{-1},.)\, ds$ stands for the linear form defined by $$x\in\mathfrak k\mapsto \int_0^{1}(\gamma_s'\gamma_s^{-1},x_s)\, ds.$$
Notice that the coadjoint action of the loop group doesn't affect the level, i.e. the coordinate in $\Lambda_0$.
 Let us equipp $L\mathfrak k$ with the   $L_2$-norm, and consider its completion $L_2([0,1],\mathfrak k)$ with respect with the $L_2$-norm. We define  
\begin{align*} 
LH^1([0,1],K)&=\{\gamma:[0,1]\to K, \gamma\textrm{ is absolutely continuous, }\, 
 \gamma^{-1}\gamma' \in L_2([0,1],\mathfrak k),\, \gamma(0)=\gamma(1) \}\\
H^1([0,1],\mathfrak k)&=\{f:([0,1]\to\mathfrak k, f\textrm{ is absolutely continuous}, f'\in L_2([0,1],\mathfrak k) ,\,  f(0)=0\},\\
H^1([0,1],K)&=\{h:[0,1]\to K, h\textrm{ is absolutely continuous, }\, h^{-1}h' \in L_2([0,1],\mathfrak k),\, h(0)=e \}.
\end{align*}
If we denote by $(L_2([0,1],\mathfrak k))'$  the topological dual of $L_2([0,1],\mathfrak k)$ then (\ref{coadjoint}) defines  an action of $LH^1([0,1],K)$  on  $(L_2([0,1],\mathfrak k))'\oplus \R\Lambda_0$, given by 
$$\gamma.(\phi_x+\lambda\Lambda_0) =\int_0^{1}(\gamma_sx'_s\gamma_s^{-1},.)\, ds-\lambda\int_0^{1}(\gamma_s' \gamma_s^{-1},.)\, ds+\lambda\Lambda_0,$$
for any $\gamma\in LH^1([0,1],K)$, where  $\phi_x$ is a linear form  defined on $L_2([0,1],\mathfrak k)$  by $$\phi_x(y)=\int_0^{1}(y_t,x'_t)\, dt,$$ for any $y\in L_2([0,1],\mathfrak k)$, and  $x\in H^1([0,1],\mathfrak k)$. This    action    gives rise to an action  $.$ of $LH^1([0,1],K)$ on $H^1([0,1],\mathfrak{k})\oplus \R\Lambda_0$ defined by 
\begin{align} \label{correspondence}
\gamma. (x+\lambda\Lambda_0)=\int_0^.(\gamma_s x'_s \gamma^{-1}_s\, - \lambda\gamma'_s\gamma^{-1}_s)\, ds+\lambda\Lambda_0,
\end{align}
for any  $\gamma\in LH^1([0,1],K)$, $x\in H^1([0,1],\mathfrak{k})$ and $\lambda\in\R$, which satisfies $$\phi_{(\gamma.(x+\lambda\Lambda_0)-\lambda\Lambda_0)}+ \lambda\Lambda_0=\gamma.(\phi_{x}+ \lambda\Lambda_0).$$
There is another way to make  this action appear naturally.  
One defines a group action of the loop group $LH^1([0,1],K)$ on $H^1([0,1],K)$ by letting for any $\gamma\in LH^1([0,1],K)$,
$$ (\gamma.h)_t=\gamma_0h_t\gamma_t^{-1},$$
for $h\in H^1([0,1],K)$ and $t\in [0,1]$.
The set $H^1([0,1],K)$ is in one to one correspondence with the set $H^1([0,1],\mathfrak k)$.
Actually for $\lambda\in \R^*$, this is a classical result of differential geometry that for any path $x\in H^1([0,1],\mathfrak{k})$ it exists a unique $X\in H^1([0,1],K)$ such that $\lambda\, dX=X\, dx$.  For any $x\in H^1([0,1],\mathfrak{k}),$ and $\lambda\in \R^*$, one defines $\epsilon(x+\lambda\Lambda_0)$ as the unique map in $H^1([0,1],K)$ which satisfies this differential equation. As 
$$\lambda\, d(\gamma_0\epsilon(x+\lambda\Lambda_0)\gamma^{-1})= \gamma_0\epsilon(x+\lambda\Lambda_0)\gamma^{-1}(\gamma dx\gamma^{-1}-\lambda d\gamma\gamma^{-1}),$$ the  action of $LH^1([0,1],K)$ on $H^1([0,1],K)$, and  the action of $LH^1([0,1],K)$ on $H^1([0,1],\mathfrak{k})\oplus \R\Lambda_0$, satisfy $$\epsilon(\gamma.(x+\lambda\Lambda_0))=\gamma.\epsilon(x+\lambda\Lambda_0)$$ when $\lambda\ne 0$.

\paragraph{\bf Roots and weights} 
 We choose a maximal torus $T$ of $K$ and denote  by $\mathfrak{t}$ its Lie algebra. 
 We denote by $\mathfrak{g}$ the complexification  of $\mathfrak{k}$, i.e. $\mathfrak g=\mathfrak k\oplus i\mathfrak k$.
 We consider  the set of real roots $$R=\{\alpha\in \mathfrak{t}^*: \exists X \in \mathfrak{g}\setminus \{ 0 \},\, \forall H\in \mathfrak{t} ,\, [H,X]=2i\pi\alpha (H)X\}.$$ We choose  a set $\Sigma$ of simple roots of $R$ and denote by $R_+$ the set of positive roots.  The half sum of positive roots is denoted by $\rho$. Letting for $\alpha\in R$, $$\mathfrak{g}_\alpha=\{X\in \mathfrak{g}: \, \forall H\in \mathfrak{t},\, [H,X]=2i\pi\alpha(H)X\},$$ the coroot $\alpha^\vee$  of $\alpha\in \Sigma$ is defined to be the only vector of $\mathfrak{t}$ in $[\mathfrak{g}_\alpha,\mathfrak{g}_{-\alpha}]$ such that $\alpha(\alpha^\vee)=2$. The dual Coxeter number denoted by  $h^\vee$ is equal to $1+\rho(\theta^\vee)$, where $\theta^\vee$ is the highest coroot.   We denote respectively by $Q=\sum_i\Z\alpha_i$ and $Q^\vee=\sum_i\Z\alpha_i^\vee$ the root and the coroot lattice.      The  weight lattice $ \{\lambda\in \mathfrak{t}^*: \lambda(\alpha^\vee)\in \mathbb{Z}, \, \forall \alpha\in \Sigma\} $ is denoted by $P$ and the set $\{\lambda\in \mathfrak{t}^*: \lambda(\alpha^\vee)\in \mathbb{N}, \, \forall\alpha\in \Sigma\} $ of dominant weights is denoted by $P_+$.
\newline 

\paragraph{\bf Orbits and radial part of a   path from $H^1([0,1],\mathfrak k)$.}  
\begin{prop} Let $x,y\in H^1([0,1],\mathfrak{k})$ and $\lambda\in \R_+^*$. 
\begin{enumerate}
\item  For any $ \gamma\in LH^1([0,1],K)$, one has  $\gamma.(x+\lambda\Lambda_0)=(y+\lambda\Lambda_0)$ if and only if  $\gamma.(\phi_{x}+\lambda\Lambda_0)=\phi_{y}+\lambda\Lambda_0.$
\item It exists
$ \gamma\in LH^1([0,1],K) ,$ such that $\gamma.(x+\lambda\Lambda_0)=y$ if and only if  it exists $u\in K$ such that $\Ad(u)\epsilon(x+\lambda\Lambda_0)_{1}=\epsilon(y+\lambda\Lambda_0)_{1}.$
\end{enumerate}
\end{prop}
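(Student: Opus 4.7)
I would dispatch the two items separately, each by unpacking one of the equivariance identities already displayed earlier in the section.

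For part (1), the displayed identity
\[
\phi_{\gamma.(x+\lambda\Lambda_0)-\lambda\Lambda_0} + \lambda\Lambda_0 = \gamma.(\phi_x + \lambda\Lambda_0)
\]
shows that the map $(x,\lambda) \mapsto (\phi_x,\lambda)$ intertwines the two actions, so the equivalence reduces to the injectivity of $x \mapsto \phi_x$ on $H^1([0,1],\mathfrak{k})$. This is immediate from the formula $\phi_x(y) = \int_0^1(y_t,x'_t)\,dt$: the linear form $\phi_x$ determines $x' \in L_2([0,1],\mathfrak k)$ by $L_2$-duality, and the condition $x(0) = 0$ recovers $x$ from $x'$ by integration.

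For part (2), I would set $X = \epsilon(x+\lambda\Lambda_0)$ and $Y = \epsilon(y+\lambda\Lambda_0)$, and exploit the intertwining relation $\epsilon(\gamma.(x+\lambda\Lambda_0)) = \gamma.\epsilon(x+\lambda\Lambda_0)$, valid since $\lambda \neq 0$. The forward direction is then quick: if $\gamma.(x+\lambda\Lambda_0) = y+\lambda\Lambda_0$, then $Y_t = \gamma_0 X_t \gamma_t^{-1}$ for every $t\in [0,1]$, and specialising at $t=1$ together with the loop condition $\gamma_0 = \gamma_1 =: u$ yields $Y_1 = \Ad(u) X_1$.

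For the converse, given $u \in K$ with $\Ad(u)X_1 = Y_1$, I would construct $\gamma$ explicitly by
\[
\gamma_t := Y_t^{-1} u X_t, \qquad t\in [0,1].
\]
Since $X_0 = Y_0 = e$ one gets $\gamma_0 = u$, and the hypothesis $uX_1 u^{-1} = Y_1$ forces $\gamma_1 = u$, so $\gamma$ is a loop. The main technical point — and the only non-cosmetic calculation in the proof — is to verify that $\gamma \in LH^1([0,1],K)$, i.e.\ $\gamma^{-1}\gamma' \in L_2$. A direct product-rule computation gives
\[
\gamma_t^{-1}\gamma'_t = -\Ad(X_t^{-1}u^{-1})(Y'_t Y_t^{-1}) + X_t^{-1}X'_t,
\]
and both summands lie in $L_2$ since $X^{-1}X', Y^{-1}Y' \in L_2$ and $\Ad$ acts by isometries of $\mathfrak k$. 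By construction $\gamma_0 X_t \gamma_t^{-1} = Y_t$, so $\gamma.\epsilon(x+\lambda\Lambda_0) = \epsilon(y+\lambda\Lambda_0)$; combined with the intertwining relation and the injectivity of $\epsilon$ (inverted via $x_t = \lambda\int_0^t X_s^{-1}X'_s\,ds$), this gives $\gamma.(x+\lambda\Lambda_0) = y+\lambda\Lambda_0$, completing the argument.
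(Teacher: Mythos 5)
Your proof is correct and follows essentially the same route as the paper: part (1) from the displayed intertwining identity plus injectivity of $x\mapsto\phi_x$, and part (2) by exhibiting an explicit loop conjugating the exponentials, with the useful extras (forward implication, membership of $\gamma$ in $LH^1([0,1],K)$) spelled out. One point worth noting: your loop $\gamma_t=\epsilon(y+\lambda\Lambda_0)_t^{-1}\,u\,\epsilon(x+\lambda\Lambda_0)_t$ differs from the paper's $\gamma=\epsilon(y+\lambda\Lambda_0)\,u\,\epsilon(x+\lambda\Lambda_0)^{-1}$, and with the conventions as written in the paper (action $(\gamma.h)_t=\gamma_0h_t\gamma_t^{-1}$ and $\lambda\,dX=X\,dx$) it is your formula that satisfies $\gamma_0\epsilon(x+\lambda\Lambda_0)_t\gamma_t^{-1}=\epsilon(y+\lambda\Lambda_0)_t$, the paper's expression corresponding rather to the opposite convention $(\gamma.h)_t=\gamma_th_t\gamma_0^{-1}$; either way both loops are in $LH^1([0,1],K)$ and the statement holds.
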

\begin{proof} The first point comes from identity (\ref{correspondence}). For the second we write  that   if $\Ad(u)\epsilon(x+\lambda\Lambda_0)_{1}=\epsilon(y+\lambda\Lambda_0)_{1}$, and   $\gamma=\epsilon(y+\lambda\Lambda_0)u\epsilon(x+\lambda\Lambda_0)^{-1},$ then $\gamma\in LH^1([0,1],K)$ and $\gamma.(x+\lambda\Lambda_0)= y+\lambda\Lambda_0$.
\end{proof}  
For $\alpha\in \Sigma$ the fundamental reflection $s_{ \alpha^\vee}$ is defined on $\mathfrak{t}$ by $$s_{\alpha^\vee}(x)=x-\alpha(x)\alpha^\vee,\quad  \textrm{ for } x \in \mathfrak{t}.$$ We consider the extended affine Weyl group  generated by the reflections $s_{\alpha^\vee}$ and the translations by $\alpha^\vee$, $x\in\mathfrak t\mapsto x+ \alpha^\vee,$ for $\alpha\in \Sigma$.  The fundamental domain   for its action  on $\mathfrak{t}$ is  
\begin{align*}
A=\{x\in \mathfrak{t} : \forall \alpha\in R_+, \, \, 0\le \alpha(x)\le  1\}
\end{align*}
(see for instance section 4.8 of \cite{Humphreys}).
For $x\in K$, one defines $\mathcal O_x$ as the adjoint orbit through $x$, i.e.
$$\mathcal O_x=\{y\in K: \exists u\in K, \, y=uxu^*\}.$$
We consider the exponential map $\exp:\mathfrak{k}\to K$.   As   $K$ is simply connected,  the set of conjugacy classes $K/\Ad(K)$ is in one-to-one correspondence with the fundamental domain $A$, i.e. for all $u\in K$,  it exists a unique  $x\in A$ such that  $u\in \mathcal O_{\exp(x)}$
 (see \cite{Brocker} for instance). Thus given $\lambda\in \R^*_+$,  every paths in $H^1([0,1],\mathfrak k)$ is conjugated to a  straight path $$s\in[0,1]\mapsto sy,$$ for some  $y\in A$, and one can give the following definition for the radial part of $x\in H^1([0,1],\mathfrak k)$, given a positive level $\lambda\in \R_+^*$.
\begin{defn} For $(x,\lambda)\in H^1([0,1],\mathfrak k)\times \R_+^*$, one defines the radial part of $x+\lambda\Lambda_0$ as the unique element $r$ in $A$ such that $\epsilon(x+\lambda\Lambda_0)_{1}\in \mathcal{O}_{\exp( r)}$. It is denoted by $\rad(x+\lambda\Lambda_0)$.
\end{defn} 
\paragraph{\bf Radial part of a continuous semi-martingale}  The aim of this part is to define the radial part of a $\mathfrak k$-valued Brownian path. Such a path is not in $H^1([0,1],\mathfrak k)$ but one can define a stochastic exponential of a Brownian path, which allows us to define its radial part, by analogy with what we've done above.   Let $(\Omega,\mathcal F,(\mathcal F_s)_{s\in[0,1]},\P)$ be a  filtered probability space.   The following results have been proved for instance in   \cite{karandikar} or \cite{lepingle}. If $(x_s)_{s\in [0,1]}$ is an $\mathfrak k$-valued continuous semi-martingale and $\lambda\in\R_+^*$, then the stochastic differential equation
\begin{align}\label{eds}
\lambda \, dX=X\circ dx,
\end{align} 
where $\circ$ stands for the Stratonovitch integral, has a unique solution starting from $e$. Such a solution is a $K$-valued process, that we still denote by $(\epsilon(x+\lambda\Lambda_0)_s)_{s\in [0,1]}$. This is  the Stratonovitch  stochastic exponential of $\frac{x}{\lambda}$.   Note that given $\lambda\in \R_+^*$, $x$ can be recovered from $\epsilon(x+\lambda\Lambda_0)$ as $dx=\lambda\, \epsilon(x+\lambda\Lambda_0)^{-1}\circ d\epsilon(x+\lambda\Lambda_0)$.  
 \begin{defn}  
For $\lambda\in \R_+^*$, and $x=(x_s)_{s\in[0,1]}$ a continuous $\mathfrak k$-valued semi-martingale, one defines the radial part of $(x+\lambda\Lambda_0)$  as the unique element $r$  in $A$ such that $\epsilon(x+\lambda\Lambda_0)_{1}\in \mathcal{O}_{\exp( r)}$. It is denoted by $\rad(x+\lambda\Lambda_0)$.
\end{defn}

 \section{The Conditional law of a brownian motion in $\mathfrak k$ given the end-point of its stochastic exponential}\label{cond-law-end-point}
We denote by $\Lambda$ the kernel of the restriction $\exp_{\vert \mathfrak{t}}$ and by $\Lambda^*$ the set of integral weights $ \{\lambda\in \mathfrak{t}^*: \lambda(\Lambda)\in  \mathbb{Z}\} $, which is included in $P$ since $\alpha^\vee\in \Lambda$ (see \cite{Brocker} for instance).  Thus, we define an application  $\vartheta_\lambda$  on $T$, when $\lambda\in \Lambda^*$, by letting 
 $\vartheta_\lambda(\exp(x))=e^{2i\pi\lambda(x)}$, for $x\in \mathfrak{t}$. The irreducible representations of $K$ are parametrized by the set $\Lambda^*_+=\Lambda^*\cap \mathcal{C}^\vee$, where $\mathcal C^\vee=\{\lambda\in \mathfrak t^*: \lambda(\alpha^\vee)\ge 0, \alpha\in \Sigma\}$.  Here, $K$ is supposed to be simply connected, so that $P=\Lambda^*$ and $P_+=\Lambda^*_+$. We denote by   $\ch_\lambda$ the character  of  the irreducible  representation with highest weight $\lambda\in  P_+$.  We recall that $\mathfrak t$ is equipped with a $W$-invariant scalar product $(.,.)$. We identify and $\mathfrak t$ and $\mathfrak t^*$ via   $(.,.)$ and  still denote by $(.,.)$ the   scalar product on $\mathfrak t^*.$  
We write sometimes $e^x$ instead of $\exp(x)$, for $x\in \mathfrak k$.
 
\paragraph{\bf Brownian motion on $K$}Let $(\Omega,(\mathcal F_s)_{s\in[ 0,1]},\mathbb P)$ be a probability space, where $(\mathcal F_s)_{s\in[ 0,1]}$ is the natural filtration of a  $\mathfrak{k}$-valued  standard continuous Brownian motion $(x_s^\sigma)_{s\in[0,1]}$  defined on $\Omega$,  with variance $\sigma>0$.  In this section we consider the stochastic exponential $\epsilon(x^\sigma+\lambda\Lambda_0)$ only for $\lambda=1$. As for any $\lambda\in \R_+^*$, $\epsilon(x^\sigma+\lambda\Lambda_0)=\epsilon(x^{\frac{\sigma}{\lambda^2}}+\Lambda_0)$, there is no loss of generality. In the sequel we let $\epsilon(x^\sigma)=\epsilon(x^\sigma+\Lambda_0)$. The stochastic exponential  $(\epsilon(x^\sigma)_s)_{s\in [0,1]}$ is a left Levy process on $K$ starting from $e$, with transition probability $(p_s^\sigma)_{s\in [0,1]}$ with respect to the Haar measure on $K$, defined on $K$ by
\begin{align*}
p^\sigma_s(x,y)=p_s^\sigma(e,x^{-1}y)=\sum_{\lambda\in P_{+}}\ch_\lambda(e)\ch_\lambda(x^{-1}y)e^{-\frac{s\sigma (2\pi )^2}{2} (\vert\vert\lambda+\rho\vert\vert^2 -\vert\vert \rho\vert\vert^2)},
\end{align*}
$s\in[0,1]$, $x,y\in K$.  In the sequel we write $p_s^\sigma(x)$ instead of $p_s^\sigma(e,x)$. This process is a Brownian motion on $K$. The following proposition states a Girsanov formula for a Brownian motion on a compact Lie group. It is proved  for instance in \cite{Driver}, \cite{Gordina} or \cite{karandikar}.  For a $\mathfrak k$-valued $L_2$ function $y$, and a $\mathfrak k$-valued continuous semi-martingale $(x_s)_{s\in [0,1]}$, $\int_0^t(y_s,dx_s)$ is defined as the stochastic integral of $y$   with respect to $x$, for any $t\in [0,1]$.  
\begin{theo} \label{Girsanov}   Let $(x^\sigma_s)_{s\in [0,1]}$ be a Brownian motion on $\mathfrak k$, with variance $\sigma\in \R_+^*$, and $h\in H^1([0,1],K)$. If $\mu^\sigma$ is the law of $(\epsilon(x^\sigma)_s)_{s\in [0,1]}$, then 
$$\frac{d (\mathcal{R}_h)_*\mu^\sigma}{d\mu^\sigma}=e^{\frac{1}{\sigma}\int_0^1 ( h_s^{-1}h_s',dx^\sigma_s)-\frac{1}{2\sigma}\int_0^1( h^{-1}_sh_s',h_s^{-1}h_s')\, ds},$$
 where $(\mathcal{R}_h)_*\mu^\sigma$ is the law of $\epsilon(x^\sigma)h$.
\end{theo}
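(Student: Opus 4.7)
The plan is to combine a pathwise identification of $\epsilon(x^\sigma)h$ with a stochastic exponential on $\mathfrak{k}$ and then to invoke the classical Cameron--Martin--Girsanov theorem on the inner-product space $\mathfrak{k}$. First I would compute the Stratonovitch logarithmic derivative of $Z_t:=\epsilon(x^\sigma)_t h_t$. Since $h$ is absolutely continuous, no extra bracket term comes from $h$ in $dZ$, and one obtains
\begin{equation*}
Z_t^{-1} dZ_t \;=\; h_t^{-1}(\circ dx^\sigma_t)\, h_t + h_t^{-1}h_t'\,dt \;=\; \Ad(h_t^{-1})\circ dx^\sigma_t + h_t^{-1}h_t'\,dt.
\end{equation*}
Setting $w_t:=\int_0^t \Ad(h_s^{-1})\circ dx^\sigma_s$ and $A_t:=\int_0^t h_s^{-1}h_s'\,ds$, this rewrites as the pathwise identity $\epsilon(x^\sigma)\, h = \epsilon(w+A)$.

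Next I would observe that $w$ itself is a Brownian motion on $\mathfrak{k}$ with variance $\sigma$ under $\mathbb{P}$. Indeed, because the inner product is $\Ad(K)$-invariant, $\Ad(h_s^{-1})$ is an orthogonal operator on $\mathfrak{k}$ for every $s$; as $h$ is deterministic, Stratonovitch and It\^o coincide in the definition of $w$, so $w$ is a continuous local martingale with quadratic-variation tensor $\sigma t \cdot I$, and L\'evy's characterisation concludes.

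I would then apply the classical Girsanov theorem for the Brownian motion $x^\sigma$ on $\mathfrak{k}$ with the deterministic $H^1$-drift $A$: letting $\mathbb{Q}$ be defined on $\mathcal{F}_1$ by $d\mathbb{Q}/d\mathbb{P}=D$ where $D$ is the exponential appearing on the right-hand side of the statement, the shifted process $\tilde x^\sigma_t:=x^\sigma_t - A_t$ is a Brownian motion on $\mathfrak{k}$ of variance $\sigma$ under $\mathbb{Q}$. Consequently, under $\mathbb{Q}$ one has $\epsilon(x^\sigma)=\epsilon(\tilde x^\sigma+A)$. Since $\tilde x^\sigma$ under $\mathbb{Q}$ and $w$ under $\mathbb{P}$ are both Brownian motions on $\mathfrak{k}$ of the same variance $\sigma$ and $A$ is deterministic, the SDE $dX = X\circ d(B+A)$, $X_0=e$, produces processes with the same law in both situations. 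Hence for any bounded measurable test functional $F$ on the path space,
\begin{equation*}
\mathbb{E}\bigl[F(\epsilon(x^\sigma)h)\bigr]=\mathbb{E}\bigl[F(\epsilon(w+A))\bigr]=\mathbb{E}_{\mathbb{Q}}\bigl[F(\epsilon(x^\sigma))\bigr]=\mathbb{E}\bigl[F(\epsilon(x^\sigma))\,D\bigr],
\end{equation*}
which is exactly the claimed identity $(\mathcal{R}_h)_*\mu^\sigma = D\cdot\mu^\sigma$.

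The main obstacle is the pathwise identification $\epsilon(x^\sigma)h=\epsilon(w+A)$: one must handle carefully the non-commutative conjugation $\Ad(h^{-1})$ that comes out of Stratonovitch calculus on the group $K$, and then recognise that this twisting does not alter the law of the driving Brownian motion thanks to the $\Ad(K)$-invariance of $(.,.)$. Once this step and the orthogonality of $\Ad(h_s^{-1})$ are in hand, the remainder is a direct translation of the finite-dimensional Cameron--Martin--Girsanov theorem on $\mathfrak{k}$.
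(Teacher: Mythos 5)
Your proof is correct. Note that the paper does not actually prove this theorem: it is quoted with references (Driver, Gordina, Karandikar / Hakim-Dowek--L\'epingle), so there is no internal proof to compare against; your argument is essentially the standard one found in those references, reduced here to its cleanest form. The three ingredients are all sound: (i) the Stratonovitch product rule gives $Z_t^{-1}dZ_t=\Ad(h_t^{-1})\circ dx_t^\sigma+h_t^{-1}h_t'\,dt$, and since $h_0=e$ (this is part of the definition of $H^1([0,1],K)$ and is what makes $Z_0=e$, a point worth stating) pathwise uniqueness for the linear SDE identifies $\epsilon(x^\sigma)h=\epsilon(w+A)$; (ii) because $h$ is deterministic and of finite variation, the Stratonovitch correction in $w$ vanishes and $\Ad(h_s^{-1})$ is orthogonal for the $\Ad(K)$-invariant inner product, so L\'evy's characterisation makes $w$ a Brownian motion of variance $\sigma$; (iii) the flat Cameron--Martin--Girsanov theorem with deterministic drift $A'=h^{-1}h'\in L_2$ (Novikov is trivial here) gives exactly the density $D$ of the statement, with the correct sign and the correct $\frac1\sigma$ normalisation. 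Two small points deserve a sentence in a polished write-up: the equality in law of $\epsilon(\tilde x^\sigma+A)$ under $\mathbb Q$ and $\epsilon(w+A)$ under $\mathbb P$ uses that the solution of $dX=X\circ d(B+A)$ is, by strong existence and pathwise uniqueness, a measurable functional of the driver, so its law depends only on the law of $B$; and the right-hand side $D$, written as a stochastic integral against $x^\sigma$, is indeed a functional of the path $\epsilon(x^\sigma)$ because $x^\sigma$ is recovered from it via $dx^\sigma=\epsilon(x^\sigma)^{-1}\circ d\epsilon(x^\sigma)$, which is what makes the Radon--Nikodym statement on $\mu^\sigma$ meaningful.
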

For $z\in K$, we write $\mathbb P^z$, for the probability defined on $\mathcal F_1$   as the conditional probability $\P(.\vert \epsilon(x^\sigma)_1=z)$. One has for any $s\in(0,1)$, 
\begin{align}
\mathbb P^z_{\vert\mathcal F_s}=\frac{p^\sigma_{1-s}(\epsilon(x^\sigma)_s,z)}{p_1(z)}\, .\,\mathbb P_{\vert\mathcal F_s}.
\end{align}
Note that under $\mathbb P^z$,  $(x^\sigma_s)_{s\in[0,1)}$ remains a continuous semi-martingale (see for instance \cite{Baudoin}, theorem 14), so that the stochastic integral $\int_0^t(y_s,dx^\sigma_s)$ is well defined under $\P^z$, for any $\mathfrak k$-valued $L_2$ function $y$ and $t\in(0,1)$.  Theorem \ref{Girsanov}  implies the following  proposition, which appears in proposition (5.2.12) of \cite{Frenkel}.
 
 \begin{prop}\label{condendpoint} Let $(x_s^\sigma)_{s\in [0,1]}$ be a standard Brownian motion on $\mathfrak k$ with variance $\sigma>0$ and $y$ be a $\mathfrak k$-valued $L_2$ function. 
 If $h\in H^1([0,1],K)$ satisfies   $h^{-1}h'=y$ then  for any $t\in(0,1)$,
\begin{align*}
e^{-\frac{1}{2\sigma}\int_0^t(y_s,y_s)\, ds }\E\big( e^{\frac{1}{\sigma}\int_0^{t} (y_s, dx^\sigma_s)}\vert \epsilon(x^\sigma)_1 
=z)=\frac{p_{1}^\sigma(zh_t^{-1})}{p_1^\sigma(z)}.
\end{align*}
\end{prop}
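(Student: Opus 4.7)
The plan is to reduce the identity to Theorem \ref{Girsanov} by extending $h$ to a path that becomes constant after time $t$, applying Girsanov to the extension, and then disintegrating along the endpoint $\epsilon(x^\sigma)_1$. Write
$$M_t = \exp\!\Big(\tfrac{1}{\sigma}\textstyle\int_0^t(y_s, dx_s^\sigma)\, -\, \tfrac{1}{2\sigma}\textstyle\int_0^t(y_s, y_s)\, ds\Big)$$
for the positive functional whose conditional expectation must be computed; the claim is equivalent to $\E[M_t \mid \epsilon(x^\sigma)_1 = z] = p_1^\sigma(zh_t^{-1}) / p_1^\sigma(z)$.

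First, truncate $y$ to $\tilde y = y\,\mathbf 1_{[0,t]} \in L_2([0,1], \mathfrak k)$ and let $\tilde h$ be the unique element of $H^1([0,1], K)$ with $\tilde h_0 = e$ and $\tilde h^{-1}\tilde h' = \tilde y$. By ODE uniqueness, $\tilde h_s = h_s$ for $s \in [0,t]$ and $\tilde h_s = h_t$ for $s \in [t,1]$; in particular $\tilde h_1 = h_t$, and since $\tilde h^{-1}\tilde h'$ is supported on $[0,t]$, the Girsanov density $d(\mathcal R_{\tilde h})_*\mu^\sigma/d\mu^\sigma$ from Theorem \ref{Girsanov} collapses to exactly $M_t$. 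Testing the Girsanov identity against a functional $F(\gamma) = f(\gamma_1)$ for an arbitrary bounded measurable $f: K \to \C$ then gives
$$\E[f(\epsilon(x^\sigma)_1 h_t)] = \E[f(\epsilon(x^\sigma)_1)\, M_t].$$

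It remains to disintegrate each side using the density $p_1^\sigma$ of $\epsilon(x^\sigma)_1$ with respect to Haar measure on $K$. The left side equals $\int_K f(wh_t) p_1^\sigma(w)\, dw$, which by right-invariance of Haar measure equals $\int_K f(z) p_1^\sigma(zh_t^{-1})\, dz$; the right side equals $\int_K f(z)\, \E[M_t \mid \epsilon(x^\sigma)_1 = z]\, p_1^\sigma(z)\, dz$ by the tower property. Since $f$ is arbitrary, the integrands agree Haar-a.e., yielding the claimed formula. The only technical care needed is that $\tilde h \in H^1([0,1], K)$, which is immediate from $\tilde y \in L_2$, and that $\E[M_t \mid \epsilon(x^\sigma)_1 = z]$ be well defined — this is guaranteed by the remark quoted from \cite{Baudoin} that $x^\sigma$ stays a semi-martingale under $\mathbb P^z$ on $[0,1)$, so $\int_0^t(y, dx^\sigma)$ is meaningful for $t < 1$. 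The conceptual core of the argument is the recognition that the sought conditional expectation is encoded precisely in the Radon–Nikodym derivative associated to the truncated extension $\tilde h$.
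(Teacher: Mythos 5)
Your proof is correct, and it reaches the identity by a route that differs in its mechanics from the paper's. The paper works directly under the bridge law: it uses the explicit density $\P^z_{\vert\mathcal F_t}=\frac{p^\sigma_{1-t}(\epsilon(x^\sigma)_t,z)}{p_1^\sigma(z)}\P_{\vert\mathcal F_t}$, applies Theorem \ref{Girsanov} to shift $\epsilon(x^\sigma)_t$ to $\epsilon(x^\sigma)_th_t$, then uses the class-function property of the heat kernel ($p^\sigma_{1-t}(\epsilon_t h_t,z)=p^\sigma_{1-t}(\epsilon_t,zh_t^{-1})$) and Chapman--Kolmogorov to integrate out $\epsilon(x^\sigma)_t$. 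You instead truncate $h$ to $\tilde h$ constant after time $t$ so that the $[0,1]$ Girsanov density collapses exactly to $M_t$, test the resulting identity against functions of the endpoint only, and disintegrate using bi-invariance of Haar measure. Your version buys two things: it makes rigorous the step the paper performs somewhat implicitly (applying a Girsanov statement written with $\int_0^1$ to an integral over $[0,t]$ -- your $\tilde h$ is exactly the object that justifies this), and it replaces the conjugation-invariance of $p^\sigma_{1-t}$ plus the semigroup property by the single, more elementary fact that Haar measure is right-invariant. What the paper's computation buys in exchange is a pointwise statement: since it computes under the explicitly defined version $\P^z$, the formula holds for every $z$, whereas your disintegration pins down $\E(M_t\mid\epsilon(x^\sigma)_1=z)$ only for Haar-almost every $z$; to recover the statement for the specific version $\P^z$ used later in the paper one should either add a continuity-in-$z$ remark (both sides are continuous, since $p_1^\sigma$ is smooth and strictly positive) or argue under $\P^z$ directly as the paper does. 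This is a minor caveat, not a gap.
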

\begin{proof} For $t\in(0,1)$,
\begin{align*}
\E(e^{\frac{1}{\sigma}\int_0^t (y_s,dx_s^\sigma)}\vert \epsilon(x^\sigma)_1=z)&=\E(e^{\frac{1}{\sigma}\int_0^t (y_s,dx_s^\sigma)}\frac{p^\sigma_{1-t}(\epsilon(x^\sigma)_t,z)}{p^\sigma_1(z)})\\
&=e^{ \frac{1}{2\sigma}\int_0^t(y_s,y_s)\, ds}\E(e^{\frac{1}{\sigma}\int_0^t (y_s,dx_s^\sigma)-\frac{1}{2\sigma}\int_0^t(y_s,y_s)\, ds}\frac{p^\sigma_{1-t}(\epsilon(x^\sigma)_t,z)}{p^\sigma_1(z)})
\end{align*}
 Let $h\in H^1(K)$ such that $h^{-1}h'=y$. Theorem \ref{Girsanov}    implies  that 
\begin{align*}
\E(e^{\frac{1}{\sigma}\int_0^t (y_s,dx_s^\sigma)-\frac{1}{2\sigma}\int_0^t(y_s,y_s)\, ds}\frac{p^\sigma_{1-t}(\epsilon(x^\sigma)_t,z)}{p^\sigma_1(z)})
&= \E( \frac{p^\sigma_{1-t}(\epsilon(x^\sigma)_th_t,z)}{p^\sigma_1(z)})\\
&= \E( \frac{p^\sigma_{1-t}(\epsilon(x^\sigma)_t,zh_t^{-1})}{p^\sigma_1(z)})\\
&= \frac{p^\sigma_{1}(zh_t^{-1})}{p^\sigma_1(z)},
\end{align*} 
which gives the proposition.
\end{proof}
The following lemma has been proved in \cite{Frenkel}.
 \begin{lem}\label{endorbit} For $k_1,k_2\in K$, 

$$\int_Kp_s^\sigma(k_1,uk_2u^*) \,du =\sum_{\lambda\in P_+} \ch_\lambda(k_1^{-1})\ch_\lambda(k_2)  e^{-\frac{s\sigma (2\pi)^2}{2}(\vert\vert\rho+\lambda\vert\vert^2-\vert\vert \rho\vert\vert^2)}.$$

 \end{lem}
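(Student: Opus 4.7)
The plan is to substitute the spectral expansion of $p_s^\sigma$ given just before the lemma, swap sum and integral, and evaluate the resulting Haar integral $\int_K \ch_\lambda(k_1^{-1} u k_2 u^*)\,du$ by Schur's lemma. Since $|\ch_\lambda(k)| \le \ch_\lambda(e) = d_\lambda$ and $d_\lambda$ grows polynomially in $\lambda$ while the coefficient carries a Gaussian factor $e^{-\frac{s\sigma(2\pi)^2}{2}\|\lambda+\rho\|^2}$ for $s>0$, the series is absolutely convergent uniformly in $u\in K$ and Fubini applies.

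Concretely, writing $\pi_\lambda$ for the irreducible representation of highest weight $\lambda$ with dimension $d_\lambda$, and using
\[
\ch_\lambda(k_1^{-1} u k_2 u^*) = \Tr\bigl(\pi_\lambda(k_1^{-1})\,\pi_\lambda(u)\pi_\lambda(k_2)\pi_\lambda(u)^*\bigr),
\]
I would introduce the operator
\[
B_\lambda(k_2) := \int_K \pi_\lambda(u)\,\pi_\lambda(k_2)\,\pi_\lambda(u)^*\,du.
\]
By left-invariance of Haar measure, $\pi_\lambda(v)B_\lambda(k_2)\pi_\lambda(v)^* = B_\lambda(k_2)$ for every $v\in K$, so $B_\lambda(k_2)$ intertwines $\pi_\lambda$ with itself. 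By Schur's lemma, $B_\lambda(k_2)$ is a scalar multiple of the identity; taking the trace fixes the scalar, $B_\lambda(k_2) = \frac{\ch_\lambda(k_2)}{d_\lambda}\operatorname{Id}$. Consequently,
\[
\int_K \ch_\lambda(k_1^{-1} u k_2 u^*)\,du \;=\; \Tr\bigl(\pi_\lambda(k_1^{-1})B_\lambda(k_2)\bigr)\;=\;\frac{\ch_\lambda(k_1^{-1})\,\ch_\lambda(k_2)}{d_\lambda}.
\]

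Injecting this into the expansion of $p_s^\sigma(k_1, u k_2 u^*)$, the dimension factor $\ch_\lambda(e) = d_\lambda$ in the heat kernel cancels the $d_\lambda$ in the denominator, and we obtain exactly the stated identity. There is no real obstacle: the only thing to check carefully is the interchange of sum and integral, which is justified by the uniform Gaussian bound on the summands for any fixed $s>0$.
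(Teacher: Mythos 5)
Your proposal is correct and follows essentially the same route as the paper: expand $p_s^\sigma$ in characters, interchange sum and integral, and use the identity $\int_K \ch_\lambda(k_1^{-1}uk_2u^*)\,du=\ch_\lambda(k_1^{-1})\ch_\lambda(k_2)/\ch_\lambda(e)$, which the paper simply quotes and you derive via Schur's lemma. Your added justification of the Fubini step and of the averaged-character identity is a harmless elaboration of the same argument.
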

\begin{proof} For any $\lambda\in P_+$, the characters $\ch_\lambda$  satisfies
$$\int_K\frac{\ch_\lambda(k_1^{-1}uk_2u^*)}{\ch_\lambda(e)}\, du=\frac{\ch_\lambda(k_1^{-1})}{\ch_\lambda(e)}\frac{\ch_\lambda(k_2)}{\ch_\lambda(e)},$$
for every  $k_1,k_2\in K$.
Thus 
\begin{align*}
\int_K p_s^\sigma(k_1,uk_2u^*)\, du&=\sum_{\lambda\in P_+}\ch_\lambda(e)\int_K \ch_\lambda(k_1^{-1}uk_2u^*)\, du\, e^{-\frac{s\sigma (2\pi)^2}{2}(\vert\vert\rho+\lambda\vert\vert^2-\vert\vert \rho\vert\vert^2)}\\
&=\sum_{\lambda\in P_+} \ch_\lambda(k_1^{-1})\ch_\lambda(k_2)  e^{-\frac{s\sigma (2\pi)^2}{2}(\vert\vert\rho+\lambda\vert\vert^2-\vert\vert \rho\vert\vert^2)}.
\end{align*}
\end{proof}
Previous lemma and Proposition \ref{condendpoint} imply the following one.
\begin{prop}\label{condorbitendpoint} Let $(x^\sigma_s)_{s\in[0,1]}$ be a standard Brownian motion on $\mathfrak k$ starting from $0$, with variance $\sigma>0$. For    $y$ a $\mathfrak k$-valued $L_2$ function, $r\in A$, and $t\in (0,1)$,
\begin{align*} 
e^{-\frac{1}{2\sigma}\int_0^t(y_s,y_s)\, ds}\E(e^{\frac{1}{\sigma}\int_0^t(y_s,dx_s^\sigma)\, ds}\vert \rad(x^\sigma)=r)=\frac{1}{p_1^\sigma(e^r)}\sum_{\lambda\in P_+} \ch_\lambda(h_t^{-1})\ch_\lambda(e^{r})  e^{-\frac{\sigma (2\pi)^2}{2}(\vert\vert\rho+\lambda\vert\vert^2-\vert\vert \rho\vert\vert^2)}, 
\end{align*} 
where $h\in H^1([0,1],K)$ satisfies $h^{-1}h'=y$.
\end{prop}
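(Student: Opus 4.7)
The plan is to reduce the statement to Proposition \ref{condendpoint} by averaging over the orbit $\mathcal O_{\exp(r)}$, and then to evaluate the resulting $K$-integral via Lemma \ref{endorbit}. The crucial preliminary observation is that the transition density $p_1^\sigma$ is a class function on $K$: this is immediate from the explicit expansion
\[
p^\sigma_s(e,z)=\sum_{\lambda\in P_+}\ch_\lambda(e)\ch_\lambda(z)\,e^{-\frac{s\sigma(2\pi)^2}{2}(\vert\vert\lambda+\rho\vert\vert^2-\vert\vert\rho\vert\vert^2)},
\]
together with the $\Ad(K)$-invariance of each character.

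Using this class-function invariance and the bi-invariance of Haar measure, I would disintegrate the law $p_1^\sigma(z)\,dz$ of $\epsilon(x^\sigma)_1$ along the radial map $K\to A$: orbits partition $K$, they are indexed by $A$ through $r\mapsto\mathcal O_{\exp(r)}$, and on each orbit the density relative to Haar measure is constant. Consequently the conditional law of $\epsilon(x^\sigma)_1$ given $\rad(x^\sigma)=r$ is the pushforward of the normalized Haar measure on $K$ by $u\mapsto u e^r u^*$. Combined with the tower property, this yields for any bounded measurable functional $F$,
\[
\E\bigl(F(x^\sigma)\,\big|\,\rad(x^\sigma)=r\bigr)=\int_K \E\bigl(F(x^\sigma)\,\big|\,\epsilon(x^\sigma)_1=u e^r u^*\bigr)\,du.
\]

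Applied to $F=\exp\bigl(\tfrac{1}{\sigma}\int_0^t(y_s,dx^\sigma_s)\bigr)$ and combined with Proposition \ref{condendpoint} pointwise in $u$, this gives
\[
e^{-\frac{1}{2\sigma}\int_0^t(y_s,y_s)\,ds}\,\E\bigl(e^{\frac{1}{\sigma}\int_0^t(y_s,dx^\sigma_s)}\,\big|\,\rad(x^\sigma)=r\bigr)=\int_K\frac{p_1^\sigma(u e^r u^* h_t^{-1})}{p_1^\sigma(u e^r u^*)}\,du.
\]
The denominator equals $p_1^\sigma(e^r)$ by class-function invariance and factors out. For the numerator I would invoke the identity $f(ab)=f(ba)$ valid for any class function to rewrite $p_1^\sigma(u e^r u^* h_t^{-1})=p_1^\sigma(h_t^{-1}u e^r u^*)=p^\sigma_1(h_t,u e^r u^*)$, and then Lemma \ref{endorbit} with $s=1$, $k_1=h_t$ and $k_2=e^r$ evaluates the remaining integral as the announced sum over $P_+$.

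The main delicate point is the first step: making rigorous sense of the conditioning on the null event $\{\rad(x^\sigma)=r\}$ and identifying the conditional law of the endpoint with the image of Haar measure on the orbit. Once this disintegration is granted, the rest is a direct chaining of Proposition \ref{condendpoint} and Lemma \ref{endorbit}, with no further analytic work.
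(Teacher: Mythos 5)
Your proposal is correct and follows essentially the same route as the paper's own proof: tower property over $\epsilon(x^\sigma)_1$, Proposition \ref{condendpoint}, identification of the conditional law of the endpoint given $\rad(x^\sigma)=r$ with the Haar pushforward $u\mapsto ue^ru^*$, and then Lemma \ref{endorbit}. You merely spell out explicitly (via the class-function property of $p_1^\sigma$ and $f(ab)=f(ba)$) steps the paper leaves implicit.
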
 
\begin{proof}
\begin{align*}
\E(e^{\frac{1}{\sigma}\int_0^t(y_s,dx^\sigma_s)\, ds}\vert \rad(x^\sigma)=r)&=\E(\E(e^{\frac{1}{\sigma}\int_0^t(y_s,dx^\sigma_s)\, ds}\vert \epsilon(x^\sigma)_1)\vert \rad(x^\sigma)=r)\\
&=e^{\frac{1}{2\sigma}\int_0^t(y_s,y_s)\, ds}\E( \frac{p^\sigma_{1}( \epsilon(x^\sigma)_1h_t^{-1})}{p^\sigma_1( \epsilon(x^\sigma)_1)}\vert \rad(x^\sigma)=r)\\
&=e^{\frac{1}{2\sigma}\int_0^t(y_s,y_s)\, ds}\int_K\frac{p^\sigma_1(ue^{r}u^*h^{-1}_t)}{p^\sigma_1(e^r)}\, du,
\end{align*}
which proves the proposition thanks to lemma \ref{endorbit}.
\end{proof}

\section{Affine Lie algebras and their representations}\label{affine}

In this part, we  consider an affine Lie algebra whose Cartan matrix is an extended Cartan matrix of the simple finite dimensional complex Lie algebra  $\mathfrak{g}$. Such an algebra is a nontwisted affine Lie algebra. For our purpose, we only need to consider   a realization of its Cartan subalgebra.
\subsection{Affine Lie algebras} 
 If   $\{\alpha_1,\dots,\alpha_n\}$ and  $\{\alpha^\vee_1,\dots,\alpha^\vee_n\}$ are respectively the sets of simple real roots and coroots of $K$ previously considered, we let $$\mathfrak h=\Span_\C\{\alpha^\vee_0=c-\theta^\vee,\alpha_1^\vee,\dots,\alpha^\vee_n,d\},$$
and $$\mathfrak h^*=\Span_\C\{\alpha_0=\delta-\theta,\alpha_1,\dots,\alpha_n,\Lambda_0\},$$ where
 $$\alpha_i(d)=\delta_{i0}, \quad \delta(\alpha_i^\vee)=0, \quad \Lambda_0(\alpha_i^\vee)=\delta_{i0},\quad \Lambda_0(d)=0.$$ The Killing form on $\mathfrak k$ is from now on normalized such that $(\theta^\vee,\theta^\vee)=2$. We consider its restriction to  $\mathfrak t$ and extend it to $\mathfrak h$ by $\C-$linearity, and by letting 
$$(\C c+\C d,\,\mathfrak t)=0,\quad (c,c)=(d,d)=0, \quad (c,d)=1.$$ 
The following definitions mainly come from chapters $1$ and $6$ of \cite{Kac}.  
 The linear isomorphism 
\begin{align*}
\nu:\, \,&\mathfrak{h}\to \mathfrak{h}^*, \\
& h\mapsto (h,.)
\end{align*} identifies $\mathfrak{h}$ and $\mathfrak{h}^*$. We still denote   $(.,.)$ the induced bilinear form     on $\mathfrak{h}^*$. We record that
\begin{align*}
&(\delta, \alpha_i)=0,\quad  i=0,\dots,n,\quad (\delta,\delta)=0,\quad (\delta,\Lambda_0)=1.
\end{align*}    Notice that here $\nu(\theta^\vee)=\theta$ and $(\theta^\vee\vert\theta^\vee)=(\theta\vert \theta)=2$.   
One defines the Weyl group $\widehat{W}$, as  the subgroup  of $GL(\mathfrak{h}^*)$ generated by fundamental  reflections $s_\alpha$, $\alpha\in \Pi$, defined by $$s_\alpha(\beta)=\beta-  \beta(\alpha^\vee) \alpha,\ \quad \beta\in \mathfrak{h}^*.$$   
Under the identification of $\mathfrak h$ and $\mathfrak h^*$, the action of the affine Weyl group  on $\mathfrak{h}$ is defined by $wx=\nu^{-1}w\nu x$, $x\in \mathfrak{h}$, $w\in \widehat{W}$.  The form $(., .)$ is $\widehat W$-invariant.
The affine Weyl group $\widehat W$  is the semi-direct product $W\ltimes  {\Gamma}$ (proposition 6.5 chapter 6 of \cite{Kac}) where  $\Gamma$ is the group of transformations $t_{\gamma}$, $\gamma\in \nu( {Q}^\vee)$, defined by 
$$t_\gamma(\lambda)=\lambda+\lambda(c)\gamma-((\lambda, \gamma)+\frac{1}{2}(\gamma,\gamma)\lambda(c))\delta, \quad \lambda\in \mathfrak{h}^*.$$
Thus, if $w\in W$, $\gamma\in \nu(Q^\vee)$, and $\lambda\in \mathfrak h^*$,
\begin{align}\label{actionofWaff}
wt_\gamma(\lambda)=w(\lambda+\lambda(c)\gamma)-((\lambda,\gamma)+\frac{1}{2}(\gamma,\gamma)\lambda(c))\delta.\end{align}
\begin{rem} Notice that if one identifies the quotient space $\C c\oplus\mathfrak t \oplus  d /\C c$ with $\mathfrak t$, then $\widehat W$ is identified with the extended affine Weyl group and the fundamental domain of $\C c\oplus\mathfrak t \oplus  d /\C d$ for the action of $\widehat W$ is identified with the fundamental domain of $\mathfrak t$  for the action of the extended affine Weyl group. Actually one has $$( d+y)\in\widehat{\mathcal C}\Leftrightarrow y\in A,$$ where $\widehat{\mathcal{C}}$ is the fundamental chamber defined by $$\widehat{\mathcal{C}}=\{x\in \mathfrak h: \forall \alpha\in \widehat\Sigma,\,\alpha(x)\ge 0\}.$$
The affine Weyl chamber $\widehat{\mathcal{C}}$ for the $A_1^{(1)}$ type is drawn in figure $1$ below: it is the area delimited by  yellow and orange half-planes. The alcove $A$ is highlighted in green.

\begin{center}
\includegraphics[scale=0.3]{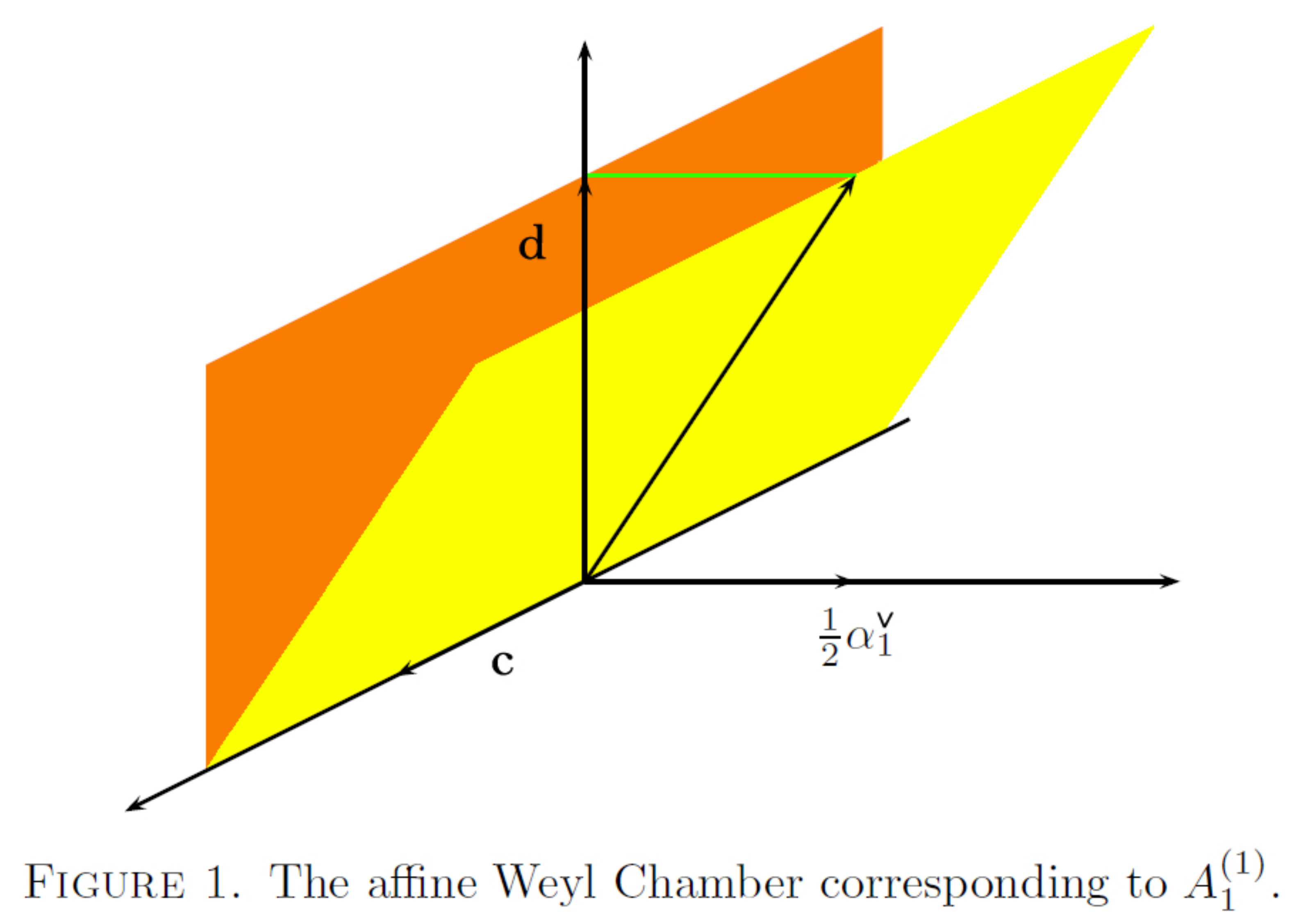}  
\end{center}

\end{rem}
\subsection{Weights, highest-weight modules, characters} The following definitions and properties mainly come from chapters $9$ and $10$ of \cite{Kac}.  We denote   $\widehat P$ (resp. $\widehat P_+$) the set of integral (resp. dominant) weights defined by 
$$\widehat P=\{\lambda\in \mathfrak{h}^*:  \langle \lambda,\alpha^\vee_i \rangle\in \Z, \, i=0,\dots, n\},$$
 $$(\textrm{resp. } \widehat P_+=\{\lambda\in \widehat P:  \langle\lambda,\alpha^\vee_i \rangle\ge 0, \, i=0,\dots,n\}),$$
where $\langle.,.\rangle$ is the pairing between $\mathfrak h$ and its dual $\mathfrak h^*$.  
For $\lambda\in \widehat P_+$ we denote   $V(\lambda)$ the irreducible module with highest weight $\lambda$.  
The Weyl-Kac character's formula (Theorem 10.4, chapter 10 of \cite{Kac}) states that 
\begin{align}\label{Weyl}
\widehat{\mbox{ch}}(V(\lambda))=\frac{\sum_{w\in \widehat W}\det(w)e^{w(\lambda+\widehat\rho)-\widehat\rho}}{\prod_{\alpha\in \widehat R_+}(1-e^{-\alpha})^{\mbox{mult}(\alpha)}.},
\end{align}
where $\widehat\rho\in \mathfrak{h}^*$ is chosen such that $\widehat\rho(\alpha_i^\vee)=1$, for all $i\in\{0,\dots,n\}$ and $\mbox{mult}(\alpha)$ is the dimension of the root space associated to the root $\alpha$.
In particular
$$\prod_{\alpha\in \widehat R_+}(1-e^{-\alpha})^{\mbox{mult}(\alpha)}=\sum_{w\in \widehat W}\det(w)e^{w(\widehat\rho)-\widehat\rho}.$$
In the sequel we choose $\widehat\rho=h^\vee\Lambda_0+\rho$. 
Writing $\widehat W =W\ltimes \Gamma$, the Weyl character's formula becomes by identity (\ref{actionofWaff})
\begin{align*}
\widehat{\mbox{ch}}(V(\lambda))=\frac{\sum\det(w)e^{w(\lambda+\widehat\rho +(\lambda+\widehat\rho )(c)\gamma) }\,e^{-((\lambda+\widehat\rho ,\gamma)+\frac{1}{2}(\gamma,\gamma)((\lambda+\widehat\rho)(c))\delta}}{\sum \det(w)e^{w( \widehat\rho  +\widehat\rho (c)\gamma) }\,e^{-(\widehat \rho ,\gamma)+\frac{1}{2}(\gamma,\gamma)\widehat \rho(c))\delta}}.
\end{align*}
where the sums run over $\gamma\in \nu(Q^\vee)$ and $w\in W$.
Letting $e^{\mu}(h)=e^{\mu(h)}$, $h\in \mathfrak{h}$, the formal character $\widehat{\mbox{ch}}(V(\lambda))$ can be seen as a function   defined on its region of convergence. Actually  the  series 
$$\sum_{\mu \in \widehat P }\mbox{dim} (V(\lambda)_\mu)e^{\langle \mu ,h\rangle} $$ converges absolutely for every $h\in \mathfrak{h}$ such that $\mbox{Re}(\delta(h))>0$. We denote by $\widehat{\mbox{ch}}_\lambda(h)$ its limit (see chapter $11$ of \cite{Kac}). The limiting function $\widehat \ch_\lambda$ is analytic on the set $$Y=\{ h\in \mathfrak{h}:\mbox{Re}(\delta(h))>0\}$$ (see chapter $12$ of \cite{Kac}).  For $y\in \mathfrak{h}$, we let $\widehat{\mbox{ch}}_y:=\widehat{\mbox{ch}}_{\nu(y)}$.
 \newline

\section{A Kirillov character type formula for affine Lie algebras}\label{Kirillov-Frenkel}
\paragraph{\bf The Kirillov's character formula for compact groups}
For $\lambda\in \mathfrak k^*$, we denote by $\mathcal{O}_{\lambda}$  the coadjoint orbit   in $\mathfrak k^*$ through $\lambda$ and by $\mu_{\lambda}$  the distribution of $\Ad(U)\lambda$ where $U$ is a random variable distributed according the Haar measure on $K$. When the restriction of $\lambda$ to $\mathfrak t$ (also denoted by $\lambda$) is a dominant weight, the Kirillov's character formula \cite{Kirillov} states  that for $x\in \mathfrak k$,    one has  
$$\int_{\mathcal{O}_{\lambda+\rho}}e^{ 2i\pi\beta(x)}\, \mu_{\lambda+\rho}(d\beta)=j(x)\frac{\ch_{\lambda}(e^x)}{\ch_\lambda(0)},$$
where
$$j(x)=\frac{\pi(x)}{h(x)},\,\pi(x)=\prod_{\alpha\in R_+}(e^{i\pi \alpha(x)}-e^{-i\pi \alpha(x)}),\, h(x)=\prod_{\alpha\in R_+} 2i\pi\alpha(x).$$ 
\begin{defn} For $x\in \mathfrak t\oplus i\mathfrak t$ and $\lambda\in \mathfrak t^*$, one defines $$\varphi_x(\lambda)=\frac{1}{h(x)}\sum_{w\in W}(-1)^w e^{\langle w \lambda,x\rangle},\quad \varphi_0(\lambda)=\frac{h(\lambda)}{h(\rho)}.$$ 
\end{defn}
The Weyl's character formula for compact Lie groups  states that 
\begin{align*}
\ch_{\lambda}(e^x)&=\frac{\sum_{w\in W}e^{2i\pi \langle w(\lambda+\rho),x\rangle}}{\sum_{w\in W}e^{2i\pi\langle w(\rho),x\rangle}} =\frac{1}{\pi(x)}
\sum_{w\in W}e^{2i\pi \langle w(\lambda+\rho),x\rangle},
\end{align*}
for any $x\in \mathfrak t$. Thus the Kirillov's character formula becomes, by approximation and analytical continuation, 
\begin{align}\label{kiriGen} 
\int_{\mathcal O_\lambda}e^{\beta(x)}\, \mu_\lambda(d\beta)=\frac{\varphi_{x}(\lambda)}{\varphi_0(\lambda)},
\end{align}
for any $x\in \mathfrak t\oplus i\mathfrak t$ and any $\lambda\in \mathfrak t^*$. This formula gives the Fourier transform of the pushforward measure  of the  measure $\mu_\lambda$ under the application $\phi\in \mathfrak k^*\mapsto \phi_{\vert \mathfrak t}\in \mathfrak t^*,$ which  is known as the Duistermaat-Heckman measure associated to $\lambda$.

\paragraph{\bf A Kirillov-Frenkel character formula for affine Lie algebras} 
Let $(x^\sigma_s)_s$ be a Brownian motion on $\mathfrak k$, with variance $\sigma>0$, and its stochastic exponential $(\epsilon(x^\sigma)_s)_s$. A Kirillov character type formula  for affine Lie algebras has been proved   by Frenkel in \cite{Frenkel}. It can be formulated as in the following theorem. 
\begin{theo}\label{kiraff} For $y\in \mathfrak t $, and $\lambda\in \widehat P_+$ such that $\lambda(c)=k$, one has  
$$\E(e^{\frac{1}{\sigma}(x^\sigma_1,y)}\vert \epsilon(x^\sigma)_1\in \mathcal O_{\exp(\sigma z)})=\widehat{j}(y)\frac{\widehat\ch_{\lambda}( d+ y)}{\widehat\ch_\lambda( d)}.$$
where $\nu(z)=\rho+\lambda-k\Lambda_0 $, $\sigma=\frac{1}{k+h^\vee}$, and  $$\widehat{j} (y)=\prod_{\alpha\in \widehat R_+\setminus R_+}\Big(\frac{1-e^{-\alpha( d+ y)}}{1-e^{-\alpha( d)}}\Big)^{\textrm{mult}(\alpha)}\prod_{\alpha\in R^+}\frac{2i\pi(1-e^{-\alpha(y)})}{1-e^{-2i\pi\alpha(y)} }.$$ 
\end{theo}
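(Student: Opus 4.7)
The strategy is to reduce Theorem \ref{kiraff} to Proposition \ref{condorbitendpoint} applied to the constant path $y_s\equiv y\in\mathfrak t$. For such a constant, the map $h_s=e^{sy}$ lies in $H^1([0,1],K)$ with $h^{-1}h'=y$, and $\int_0^1(y,dx^\sigma_s)=(y,x^\sigma_1)$. Writing $\lambda=\bar\lambda+k\Lambda_0+\lambda(d)\delta$ with $\bar\lambda\in P_+$, the conditioning event $\epsilon(x^\sigma)_1\in\mathcal O_{\exp(\sigma z)}$ amounts to $\rad(x^\sigma)=r$ with $r=\sigma\nu^{-1}(\rho+\bar\lambda)\in A$ (at level $k$ large enough, $r$ lies in the alcove). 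Taking the limit $t\to 1^-$ in Proposition \ref{condorbitendpoint} then expresses the left-hand side of Theorem \ref{kiraff} as
\begin{align*}
\frac{e^{(y,y)/(2\sigma)}}{p_1^\sigma(e^r)}\sum_{\mu\in P_+}\ch_\mu(e^{-y})\ch_\mu(e^r)e^{-\frac{\sigma(2\pi)^2}{2}(\|\mu+\rho\|^2-\|\rho\|^2)}.
\end{align*}

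The next step is to reorganise this series into the Weyl--Kac form of $\widehat\ch_\lambda(d+y)$. I would expand the two finite characters via the Weyl character formula and apply the standard antisymmetrisation trick: combining the sum over $\mu\in P_+$ with one of the Weyl-group sums produces a sum over regular $\xi\in P$, which extends to all of $P$ because the remaining alternating sum vanishes on Weyl walls. Using the $W$-invariance of $\|\cdot\|^2$, this produces an expression of the form
\begin{align*}
\frac{1}{\pi(-y)\pi(r)}\sum_{w\in W}\det(w)\sum_{\xi\in P}e^{2i\pi\langle\xi,r-wy\rangle}e^{-\frac{\sigma(2\pi)^2}{2}\|\xi\|^2}
\end{align*}
up to explicit exponential prefactors. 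Poisson summation on the lattice $P$, whose dual under $(\cdot,\cdot)$ is $\nu(Q^\vee)$, converts the inner Gaussian into $\sum_{\gamma\in\nu(Q^\vee)}e^{-\|r-wy-\gamma\|^2/(2\sigma)}$ up to a universal constant. With $\sigma=1/(k+h^\vee)$, expanding the squared norm produces exactly the quadratic term $\tfrac{1}{2}(\gamma,\gamma)(k+h^\vee)$ and the linear term $(\bar\lambda+\rho,\gamma)$ that appear in the translation action (\ref{actionofWaff}) applied to $\lambda+\widehat\rho$. Regrouping over $w\in W$ and $\gamma\in\nu(Q^\vee)$ identifies the resulting series, term by term, with the numerator of the Weyl--Kac formula (\ref{Weyl}) evaluated at $d+y$.

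For the normalisation, the same computation applied at $y=0$ expresses $p_1^\sigma(e^r)$ as the Weyl--Kac numerator of $\widehat\ch_\lambda$ at $d$ divided by the finite Weyl denominator $\pi(r)$; consequently $p_1^\sigma(e^r)^{-1}$ combines with the affine denominator at $d$ to produce $\widehat\ch_\lambda(d)^{-1}$ in the final ratio. What remains is the quotient of the Weyl--Kac denominators at $d+y$ and at $d$, multiplied by the residual factors $\pi(-y)^{-1}$ and $e^{(y,y)/(2\sigma)}$. The imaginary-root contributions $(1-e^{-n\delta})$ cancel in the ratio because $\delta(y)=0$, the finite positive roots contribute $\prod_{\alpha\in R_+}(1-e^{-\alpha(y)})$, and one checks that combining these with $\pi(-y)^{-1}$ yields precisely $\widehat j(y)$ as defined in the statement.

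The main obstacle I expect is the bookkeeping. The key matching, after Poisson summation, of $e^{-\|r-wy-\gamma\|^2/(2\sigma)}$ with the translation part of the Weyl--Kac numerator is what forces the scaling $\sigma=1/(k+h^\vee)$; any mismatch in the quadratic, linear, or constant exponents would kill the identification. Equally delicate is verifying that the residual multiplicative factors — the Poisson normalising constant, the exponential $e^{(y,y)/(2\sigma)}$, the two Weyl denominators $\pi(-y)^{-1}$ and $\pi(r)^{-1}$, and the quotient of affine denominators with all their multiplicities — collapse to exactly the compact product $\widehat j(y)$ of the statement, the imaginary roots dropping out and only the real affine and finite positive roots contributing non-trivially.
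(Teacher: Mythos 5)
Your argument follows essentially the same route as the paper: it amounts to Proposition \ref{condorbitendpoint} with constant $y$, combined with the Poisson-summation/antisymmetrisation identity of Proposition \ref{phi-char} (which you rederive inline) and the Weyl--Kac formula written over $W\ltimes\Gamma$ with $\widehat\rho=h^\vee\Lambda_0+\rho$ and $\sigma=\frac{1}{k+h^\vee}$ --- exactly the chain by which Theorem \ref{endpoint} yields Theorem \ref{kiraff} in the paper. One minor remark: no ``$k$ large enough'' is needed, since $\lambda\in\widehat P_+$ of level $k$ gives $(\bar\lambda+\rho)(\theta^\vee)\le k+h^\vee-1<\frac{1}{\sigma}$, so $\sigma\,\nu^{-1}(\bar\lambda+\rho)$ always lies in the interior of the alcove $A$.
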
 
 
\begin{defn}  For $x\in \mathfrak t$, $y\in \mathfrak t\oplus i\mathfrak t$, $a,b\in \R_+^*$, we let
\begin{align}\label{defphiaff} 
\widehat \varphi_{bd+y}(a,x)&=\frac{1}{\pi(-\frac{1}{b}y)}\sum_{w\in \widehat W}(-1)^w e^{(w(ad+x),b d+y)}
\end{align}
As $\widehat W$ is the semi-direct product $W\ltimes  {\Gamma}$ one has also
\begin{align}
 \widehat\varphi_{bd+y}(a,x)&=\frac{1}{\pi(-\frac{1}{b}y)}\sum_{\gamma\in  Q^\vee}\sum_{ w\in W}(-1)^w e^{(w(x+a\gamma),y)}e^{-b((x,\gamma)+\frac{1}{2}(\gamma,\gamma)a)},  \label{seconddef}
\end{align} 
 \end{defn}
\begin{rem} The previous definition seems  to be not valid for $y$ such that $\pi(\frac{1}{b}y)=0$. Actually,  in proposition \ref{phi-char}, we   give another expression for $\widehat\varphi_{ bd+y}$,  which shows that it is actually  well defined for every $y\in \mathfrak t\oplus i\mathfrak t$.
\end{rem}

The Poisson formula implies the following lemma.
\begin{lem} For $x\in \mathfrak t$, and $t\in \R_+^*$, one has
\begin{align*}
\sum_{\mu\in P}e^{2i\pi\mu(x)-\frac{t}{2}(2\pi)^2(\mu,\mu)}&=C\, (\frac{1}{2\pi t})^{n/2} \sum_{z\in Q^\vee}e^{-\frac{1}{2t}(x+ z,x+ z)}.
\end{align*}
where $C$ is a positive constant independent of $x$ and $t$.
\end{lem}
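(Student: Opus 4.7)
The statement is a direct application of the Poisson summation formula applied to the Gaussian function on $\mathfrak{t}^*$. My plan is first to set up the duality between the lattices $P$ and $Q^\vee$ under the inner product $(\cdot,\cdot)$, then compute one Fourier transform of a Gaussian, and finally identify the resulting sum with the right-hand side. Throughout I identify $\mathfrak{t}$ with $\mathfrak{t}^*$ via $\nu$ so that the pairing $\mu(z)$ agrees with $(\mu,z)$, and I view $x\in\mathfrak{t}$ as an element of $\mathfrak{t}^*$.

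The key algebraic input is that, because $K$ is simply connected, the weight lattice $P$ and the coroot lattice $Q^\vee$ are dual under $(\cdot,\cdot)$: by definition $(\mu,\alpha^\vee)=\mu(\alpha^\vee)\in\mathbb{Z}$ for every $\mu\in P$, $\alpha\in\Sigma$, so $Q^\vee\subset P^*$; the reverse inclusion is the standard fact that $P=\Lambda^*$ and $Q^\vee=\Lambda$ coincide with the integral lattice and its dual when $K$ is simply connected (see \cite{Brocker}). Having fixed this duality, Poisson summation gives, for any Schwartz function $f$ on $\mathfrak{t}^*\simeq\mathbb{R}^n$,
$$\sum_{\mu\in P} f(\mu)\;=\;\frac{1}{\mathrm{vol}(\mathfrak{t}^*/P)}\sum_{z\in Q^\vee}\hat f(z),$$
with $\hat f(z)=\int_{\mathfrak{t}^*}f(\mu)\,e^{-2i\pi(\mu,z)}\,d\mu$.

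Next I would apply this to $f(\mu)=e^{2i\pi\mu(x)-\frac{t}{2}(2\pi)^2(\mu,\mu)}=e^{2i\pi(\mu,x)-2\pi^2 t(\mu,\mu)}$. The Fourier transform is the classical Gaussian integral
$$\hat f(z)\;=\;\int_{\mathfrak{t}^*}e^{2i\pi(\mu,x-z)-2\pi^2 t(\mu,\mu)}\,d\mu\;=\;\Bigl(\frac{1}{2\pi t}\Bigr)^{n/2}e^{-\frac{1}{2t}(x-z,x-z)},$$
computed by completing the square (the factor of $2\pi^2 t$ in the quadratic form exactly cancels against the $\pi^2$ arising from the Fourier inversion of the standard Gaussian, leaving $1/(2t)$ in the exponent and $(2\pi t)^{-n/2}$ in front). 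Since $Q^\vee$ is stable under $z\mapsto -z$, the sum over $z\in Q^\vee$ may be rewritten with $x+z$ in place of $x-z$, producing exactly the right-hand side with $C=1/\mathrm{vol}(\mathfrak{t}^*/P)$, a positive constant depending only on the normalization of $(\cdot,\cdot)$ on $\mathfrak{t}$ and independent of $x$ and $t$.

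There is essentially no obstacle beyond careful bookkeeping of the factors of $2\pi$. The only conceptual point to verify is the lattice duality $P^*=Q^\vee$, which relies on $K$ being simply connected; the rest is the standard Gaussian Poisson identity. I would therefore expect the proof to consist of two displayed lines — the Poisson formula applied to $f$, and the Gaussian Fourier transform of $f$ — followed by a change of sign in the summation index.
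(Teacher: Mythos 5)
Your proposal is correct and follows the same route as the paper: the paper's proof is exactly Poisson summation over the dual pair $P$, $Q^\vee$ applied to the Gaussian $f(\mu)=e^{2i\pi\mu(x)-\frac{t}{2}(2\pi)^2(\mu,\mu)}$, with the Gaussian Fourier transform supplying the factor $(\frac{1}{2\pi t})^{n/2}$ and the exponent $-\frac{1}{2t}(x+z,x+z)$. Your explicit computation of $\hat f$ and the sign change $z\mapsto -z$ are just the details the paper leaves implicit.
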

\begin{proof} As $\mu$ is a weight in $P$ if and only if  $\mu(Q^\vee)\subset\Z,$ the Poisson summation formula gives for every bounded continuous function $f$ defined on $\mathfrak t^*$,
$$\sum_{\mu\in P}f(\mu)=C\sum_{z\in Q^\vee}\hat f(z),$$
where $\hat f(z)=\int_{\mathfrak t^*}e^{2i\pi \mu(z)}\, f(\mu)\, d\mu$, and $C$ is a positive constant independent of $f$. The lemma follows taking $f$ defined by $f (\mu)=e^{2i\pi\mu(x)-\frac{t}{2}(2\pi)^2(\mu,\mu)}$, $\mu\in \mathfrak t^*$.
\end{proof}
\begin{prop}\label{phi-char} For $x,y\in \mathfrak t$, and $\sigma\in \R_+^*$, one has 
\begin{align*}  
\widehat\varphi_{ d+y}(\frac{1}{\sigma},\frac{x}{\sigma})&=C(\frac{1}{2\pi\sigma})^{-n/2}e^{\frac{1}{2\sigma}(y,y)+\frac{1}{2\sigma}(x,x)}\sum_{\mu\in P_{+}}\pi(x)\ch_{\mu}(e^x)\ch_{\mu}(e^{-y})e^{-\frac{\sigma}{2}(2\pi)^2\vert\vert\mu+\rho\vert\vert^2},
\end{align*} 
where $C$ is a positive constant independent of $x$, $y$ and $\sigma$. 
\end{prop}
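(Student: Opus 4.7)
The plan is to reduce $\widehat\varphi_{d+y}(\tfrac{1}{\sigma},\tfrac{x}{\sigma})$ to a theta-type sum, apply the Poisson summation lemma, and then reassemble the output via Weyl's character formula. Starting from formula (\ref{seconddef}) with $b=1$, $a=\tfrac{1}{\sigma}$, and $x$ replaced by $\tfrac{x}{\sigma}$, I first use the $W$-invariance of $(\cdot,\cdot)$ to rewrite $(w(x+\gamma),y)=(x+\gamma,w^{-1}y)$ and relabel $w\mapsto w^{-1}$ (the sign $(-1)^w$ is unchanged). A completion of the square in $\gamma$ gives
\begin{align*}
\tfrac{1}{\sigma}(x+\gamma,wy)-\tfrac{1}{\sigma}(x,\gamma)-\tfrac{1}{2\sigma}(\gamma,\gamma)=\tfrac{1}{2\sigma}\bigl((x,x)+(y,y)\bigr)-\tfrac{1}{2\sigma}\bigl(\gamma-(wy-x),\gamma-(wy-x)\bigr),
\end{align*}
the cross terms in $w$ cancelling because $(wy,wy)=(y,y)$. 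This pulls out the Gaussian prefactor $e^{\frac{1}{2\sigma}((x,x)+(y,y))}$.

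Next, after the involution $\gamma\mapsto-\gamma$ on $Q^\vee$, the Poisson summation lemma (with $t=\sigma$ and base point $wy-x$) rewrites the remaining series as $\tfrac{(2\pi\sigma)^{n/2}}{C}\sum_{\mu\in P}e^{2i\pi\mu(wy-x)-\frac{\sigma}{2}(2\pi)^{2}(\mu,\mu)}$. Exchanging the orders of summation and relabelling $w\mapsto w^{-1}$ once more, the inner $W$-sum collapses to the antisymmetric exponential $A_\mu(y):=\sum_{w\in W}(-1)^we^{2i\pi(w\mu)(y)}$ of Weyl's character formula.

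I then exploit the standard dichotomy for $A_\mu$: it vanishes on $W$-singular weights, and every regular $\mu\in P$ is uniquely of the form $\mu=v(\lambda+\rho)$ with $\lambda\in P_+$ and $v\in W$, giving $A_\mu(y)=(-1)^v\pi(y)\ch_\lambda(e^y)$ through Weyl's character formula. The leftover sum over $v$, namely $\sum_{v\in W}(-1)^ve^{-2i\pi(v(\lambda+\rho))(x)}$, collapses by the same device to $A_{\lambda+\rho}(-x)=\pi(-x)\ch_\lambda(e^{-x})$. Collecting everything,
\begin{align*}
\widehat\varphi_{d+y}(\tfrac{1}{\sigma},\tfrac{x}{\sigma})=\tfrac{(2\pi\sigma)^{n/2}}{C}\,e^{\frac{1}{2\sigma}((x,x)+(y,y))}\,\frac{\pi(y)\pi(-x)}{\pi(-y)}\sum_{\lambda\in P_+}\ch_\lambda(e^y)\ch_\lambda(e^{-x})e^{-\frac{\sigma}{2}(2\pi)^{2}\|\lambda+\rho\|^{2}};
\end{align*}
the identity $\pi(-z)=(-1)^{|R_+|}\pi(z)$ reduces the $\pi$-quotient to $\pi(x)$.

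Finally, to swap the two character factors, I use the involution $\lambda\mapsto\lambda^{*}=-w_0\lambda$ on $P_+$: it preserves $\|\lambda+\rho\|$ (because $w_0\rho=-\rho$) and satisfies $\ch_{\lambda^{*}}(e^{z})=\ch_\lambda(e^{-z})$, so the sum is unchanged under the substitution $\ch_\lambda(e^y)\ch_\lambda(e^{-x})\leftrightarrow\ch_\lambda(e^x)\ch_\lambda(e^{-y})$. This delivers the stated identity, with the constant $C$ of the proposition differing from the Poisson constant only by an absolute positive factor. The main difficulty throughout is the disciplined bookkeeping of the signs $(-1)^w$ and of the numerous reindexings $w\leftrightarrow w^{-1}$, $v\leftrightarrow v^{-1}$, $\gamma\leftrightarrow-\gamma$, $\lambda\leftrightarrow\lambda^{*}$: no step is conceptually deep, but a single misplaced sign would spoil the clean identification of the final prefactor with $\pi(x)$.
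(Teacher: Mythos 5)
Your argument is correct and follows essentially the same route as the paper: complete the square, apply the Poisson summation lemma to pass between the $Q^\vee$-sum and a theta-type sum over $P$, and then regroup the weight lattice into $W$-orbits of shifted dominant weights via the Weyl character formula (the paper just organizes this as two evaluations of the single sum $\sum_{\mu\in P,w\in W}(-1)^w e^{2i\pi\mu(x-wy)-\frac{\sigma}{2}(2\pi)^2(\mu,\mu)}$, which lets it land directly on $\ch_\mu(e^x)\ch_\mu(e^{-y})$). Your extra final step, the involution $\lambda\mapsto-w_0\lambda$ to exchange the roles of $x$ and $y$, is legitimate and only compensates for the order in which you performed the two $W$-summations.
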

\begin{proof}
One has to  prove that
$$
\frac{1}{\pi(-y)}\sum_{w\in   \widehat{W}} (-1)^we^{\frac{1}{\sigma}(w( d+y), d+x)}$$ 
is equal to $$ C(\frac{1}{2\pi\sigma})^{-n/2}\, e^{\frac{1}{2\sigma}(y,y)+\frac{1}{2\sigma}(x,x)}\sum_{\mu\in P_{+}}\pi(x)\ch_{\mu}(e^x)\ch_{\mu}(e^{-y})e^{-\frac{\sigma}{2}(2\pi)^2\vert\vert\mu+\rho\vert\vert^2},
$$
where $C$ is a positive constant independent of $x$, $y$ and $\sigma$.  
 On the one hand, on has
\begin{align*}
C\sum_{\mu\in P,w\in W}(-1)^w &e^{2i\pi\mu(x-wy)-\frac{t}{2}(2\pi)^2(\mu,\mu)}=(\frac{1}{2\pi t})^{n/2}\sum_{z\in Q^\vee,w\in W}(-1)^we^{-\frac{1}{2t}(x-wy+ z,x-wy+ z)}\\
&= (\frac{1}{2\pi t})^{n/2} e^{-\frac{1}{2t}(y,y)-\frac{1}{2t}(x,x)}\sum_{w\in   W,z\in Q^\vee} (-1)^we^{\frac{1}{t}(w(y),x+ z)-\frac{1}{t}((x,z)+\frac{1}{2}(z,z))}\\
&=(\frac{1}{2\pi t})^{n/2} e^{-\frac{1}{2t}(y,y)-\frac{1}{2t}(x,x)}\sum_{w\in   \widehat W} (-1)^we^{\frac{1}{t}(w( d+y), d+x)}.
\end{align*} 
On the other hand,
\begin{align*}
\sum_{\mu\in P,w\in W}(-1)^w e^{2i\pi\mu(x-wy)-\frac{t}{2}(2\pi)^2(\mu,\mu)}&=\sum_{\mu\in P_{+},w,\tilde w\in W}(-1)^w e^{2i\pi\tilde w(\mu+\rho)(x-wy)-\frac{t}{2}(2\pi)^2(\tilde w (\mu+\rho),\tilde w(\mu+\rho))}\\
&=\sum_{\mu\in P_{+}}\ch_{\mu}(e^x)\pi(x)\ch_{\mu}(e^{-y})\pi(-y)e^{-\frac{t}{2}(2\pi)^2\vert\vert\mu+\rho\vert\vert^2}
\end{align*}
\end{proof} 
\begin{rem} By analytical continuation, the identity remains true for $y\in \mathfrak t\oplus i\mathfrak t$, $\sigma\in \mathbb C$, $\mbox{Re}(\sigma)>0$.
\end{rem}
Propositions \ref{phi-char} and \ref{condorbitendpoint} imply the following theorem, which implies the Kirillov character type formula of theorem \ref{kiraff}. 
\begin{theo} \label{endpoint} For $y\in L_2([0,1],\mathfrak k)$,   and $z\in   A$,  one has for $t\in (0,1)$,
$$\E(e^{\frac{1}{\sigma}\int_0^t(y_s,dx_s^\sigma)}\vert \rad(x^\sigma)=z)=e^{\frac{1}{2\sigma}\int_0^t (y_s,y_s)\, ds}e^{-\frac{1}{2\sigma}(a,a)}\frac{\widehat\varphi_{ d+a}(\frac{1}{\sigma},\frac{z}{\sigma})}{\widehat\varphi_{ d}(\frac{1}{\sigma},\frac{z}{\sigma})},$$
where $h^{-1}h'=y$ and $h_t\in \mathcal O_{e^a}$, with $a\in \mathfrak t$. In particular for $y\in \mathfrak t$, one has
$$\E(e^{\frac{1}{\sigma}(y,x_1^\sigma)}\vert \rad(x^\sigma)=z)= \frac{\widehat{\varphi}_{ d+y}(\frac{1}{\sigma},\frac{z}{\sigma})}{\widehat{\varphi}_{ d}(\frac{1}{\sigma},\frac{z}{\sigma})}.$$

\end{theo}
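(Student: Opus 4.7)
The plan is to combine Proposition \ref{condorbitendpoint} with the two applications of Proposition \ref{phi-char} corresponding to the weights $d+a$ and $d$, so that all the common prefactors telescope.

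First, I would start from the identity of Proposition \ref{condorbitendpoint}, which gives
$$e^{-\frac{1}{2\sigma}\int_0^t(y_s,y_s)\,ds}\,\E\bigl(e^{\frac{1}{\sigma}\int_0^t(y_s,dx_s^\sigma)}\bigm| \rad(x^\sigma)=z\bigr)=\frac{1}{p_1^\sigma(e^z)}\sum_{\lambda\in P_+}\ch_\lambda(h_t^{-1})\ch_\lambda(e^z)\,e^{-\frac{\sigma(2\pi)^2}{2}(\|\rho+\lambda\|^2-\|\rho\|^2)}.$$
Since $\ch_\lambda$ is a class function and $h_t\in\mathcal O_{e^a}$ with $a\in\mathfrak t$, one has $\ch_\lambda(h_t^{-1})=\ch_\lambda(e^{-a})$, so the sum on the right only involves data in $\mathfrak t$.

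Next, I would apply Proposition \ref{phi-char} in two ways. With $(x,y)=(z,a)$ it yields
$$\widehat\varphi_{d+a}\bigl(\tfrac1\sigma,\tfrac z\sigma\bigr)=C\bigl(\tfrac{1}{2\pi\sigma}\bigr)^{-n/2}\pi(z)\,e^{\frac{1}{2\sigma}(a,a)+\frac{1}{2\sigma}(z,z)}\sum_{\mu\in P_+}\ch_\mu(e^z)\ch_\mu(e^{-a})\,e^{-\frac{\sigma(2\pi)^2}{2}\|\mu+\rho\|^2},$$
and with $(x,y)=(z,0)$ it yields the analogous expression for $\widehat\varphi_{d}(\tfrac1\sigma,\tfrac z\sigma)$ with $\ch_\mu(e^{-a})$ replaced by $\ch_\mu(e)$ and without the factor $e^{\frac{1}{2\sigma}(a,a)}$. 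Noticing that the transition density satisfies
$$p_1^\sigma(e^z)=e^{\frac{\sigma(2\pi)^2}{2}\|\rho\|^2}\sum_{\lambda\in P_+}\ch_\lambda(e)\ch_\lambda(e^z)\,e^{-\frac{\sigma(2\pi)^2}{2}\|\lambda+\rho\|^2},$$
the prefactors $C(\tfrac{1}{2\pi\sigma})^{-n/2}\pi(z)\,e^{\frac{1}{2\sigma}(z,z)}$ and the factor $e^{\frac{\sigma(2\pi)^2}{2}\|\rho\|^2}$ cancel when taking the ratio of the sums. This yields
$$\frac{1}{p_1^\sigma(e^z)}\sum_{\lambda\in P_+}\ch_\lambda(e^{-a})\ch_\lambda(e^z)\,e^{-\frac{\sigma(2\pi)^2}{2}(\|\rho+\lambda\|^2-\|\rho\|^2)}=e^{-\frac{1}{2\sigma}(a,a)}\,\frac{\widehat\varphi_{d+a}(\tfrac1\sigma,\tfrac z\sigma)}{\widehat\varphi_{d}(\tfrac1\sigma,\tfrac z\sigma)},$$
which, together with the previous display, gives the first formula.

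Finally, for the second formula I would specialize to a constant $y\in\mathfrak t$: then $h_s=e^{sy}$ satisfies $h^{-1}h'=y$, and at time $t\in(0,1)$ one has $h_t=e^{ty}\in\mathcal O_{e^{ty}}$, so $a=ty$. The first formula becomes
$$\E\bigl(e^{\frac{1}{\sigma}(y,x_t^\sigma)}\bigm|\rad(x^\sigma)=z\bigr)=e^{\frac{t}{2\sigma}(y,y)}e^{-\frac{t^2}{2\sigma}(y,y)}\,\frac{\widehat\varphi_{d+ty}(\tfrac1\sigma,\tfrac z\sigma)}{\widehat\varphi_{d}(\tfrac1\sigma,\tfrac z\sigma)}.$$
Letting $t\nearrow 1$, the right-hand side is continuous in $t$, and the left-hand side converges by dominated convergence (both sides extend continuously thanks to compactness of $K$ and Gaussian tails of $x_1^\sigma$), yielding the stated identity. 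The main obstacle is essentially bookkeeping: matching the various exponential and normalisation factors in Propositions \ref{condorbitendpoint} and \ref{phi-char} so that the ratio cleanly produces the claimed ratio of $\widehat\varphi$'s.
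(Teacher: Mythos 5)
Your proposal is correct and follows essentially the same route as the paper, which obtains the theorem precisely by combining Proposition \ref{condorbitendpoint} (with $\ch_\lambda(h_t^{-1})=\ch_\lambda(e^{-a})$ by conjugation invariance) with the two instances of Proposition \ref{phi-char} and the heat-kernel expansion of $p_1^\sigma(e^z)$, so the prefactors cancel exactly as you describe. Your limiting argument $t\nearrow 1$ for the constant $y\in\mathfrak t$ case is a reasonable way to supply the detail the paper leaves implicit (the stochastic integral reduces to $(y,x_1^\sigma)$ there, and uniform integrability follows from the Gaussian tails).
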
  
 \begin{rem}  Let us make some non rigorous remarks about these formulae. Previously we have considered the identification of $L_2([0,1],\mathfrak k)'\oplus \R\Lambda_0$ with $H_1([0,1],\mathfrak k)\oplus \R\Lambda_0$ letting $\varphi_x=\int_0^1(.,x'_s)\, ds$ for $x\in H_1([0,1],\mathfrak k)$. In the first formula the stochastic integral $\int_0^1 (.,dx^\sigma_s)$ can be seen as a random linear functional. Its conditional law  given $\epsilon(x^\sigma)\in \mathcal O_{e(  z)}$ has to be thought as a measure on a coadjoint orbit through $\varphi_{\pi_{  z}}+\Lambda_0$ where $\pi_{  z}$ is the straight path $\pi_{  z}(s)=s  z$, $s\in[0,1]$.
For the second formula, we notice that the restriction of $\varphi_x$ to $\mathfrak t$ is equal to $y\in \mathfrak t\mapsto (y,x_1)$. Thus the law of $x_1^\sigma$ given $\epsilon(x^\sigma)\in \mathcal O_{e(  z)}$ has to be thought as a Duistermaat-Heckman distribution associated to $\varphi_{\pi_{  z}}+\Lambda_0$.  
\end{rem}
  \section{A Brownian motion on $\R d \oplus \mathfrak t$ conditioned to remain on the affine Weyl chamber}\label{h-process} \label{cond-affine}
 Let us consider a standard Brownian motion $(b_t)_{t\ge 0}$ on $   \mathfrak{t}$.
 We consider a random process $(\tau_t,b_t)_{t\ge 0}$ on $  \R \times \mathfrak{t}$ as follows. For     $u_0\in \R,x_0 \in  \mathfrak{t}$, denote    $\mathbb{W}_{u_0,x_0}$  a probability under which $\tau_t=u_0+ t,$ for all $ t\ge 0$, and  $(b_t)_{t\ge 0}$ is a standard Brownian motion    starting from $x_0$.  We let $x_t=\tau_td+b_t$, for $t\ge 0$. The process $(x_t)_{t\ge 0}$  is a space-time Brownian motion, $\tau_t$ being the time component.
Let us consider the stopping time $$T=\inf\{t\ge 0:  x_t\notin \widehat{\mathcal C}\}=\inf\{t\ge 0: b_t\notin \tau_t A\}.$$
\begin{lem}\label{mart} Let $u>0$ and $(b_t)_{t\ge 0}$ be a standard Brownian motion on $\mathfrak t$. 
For $y\in \mathfrak t \oplus i\mathfrak t$, $(e^{-\frac{(y,y)t}{2}}\widehat \varphi_{ d+y}(\tau_t,b_t),t\ge 0)$ is a true martingale under $\mathbb W_{x,u}$.
\end{lem}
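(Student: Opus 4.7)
The strategy is to expand $\widehat\varphi_{d+y}(\tau_t,b_t)$ as a sum of exponential martingales of $b$ indexed by $W\times Q^\vee$, and then justify the commutation of the sum with conditional expectation by a Gaussian absolute summability estimate on the coroot lattice $Q^\vee$. Starting from the second expression \eqref{seconddef} and using $\tau_t = u+t$ together with the $W$-invariance of $(\cdot,\cdot)$, a short algebraic manipulation shows that for each pair $(w,\gamma)\in W\times Q^\vee$, setting $v_{w,\gamma} := w^{-1}y-\gamma\in \mathfrak t\oplus i\mathfrak t$,
\[
e^{-\frac{(y,y)t}{2}}\,e^{(w(b_t+\tau_t\gamma),y)-(b_t,\gamma)-\frac{1}{2}(\gamma,\gamma)\tau_t}=C_{w,\gamma}(u)\cdot N^{w,\gamma}_t,
\]
where $C_{w,\gamma}(u):= e^{u(\gamma,w^{-1}y)-\frac{u}{2}(\gamma,\gamma)}$ is deterministic and
\[
N^{w,\gamma}_t := e^{(b_t,v_{w,\gamma})-\frac{1}{2}(v_{w,\gamma},v_{w,\gamma})t}
\]
is the standard exponential martingale of the Brownian motion $b$ with (possibly complex) parameter $v_{w,\gamma}$; it is a true martingale under $\mathbb W_{u,x_0}$ for any $v_{w,\gamma}\in\mathfrak t\oplus i\mathfrak t$.

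To exchange the sum and the conditional expectation, I estimate the $L^1$-norms of the summands. Writing $y=y_1+iy_2$ with $y_j\in\mathfrak t$, a direct Gaussian computation gives
\[
\mathbb E_{u,x_0}\bigl[\,|C_{w,\gamma}(u)\,N^{w,\gamma}_t|\,\bigr] = \exp\!\Bigl(\tfrac{(y_2,y_2)t}{2} + (x_0, w^{-1}y_1-\gamma) + u(\gamma, w^{-1}y_1) - \tfrac{u}{2}(\gamma,\gamma)\Bigr).
\]
Since $u>0$, the Gaussian factor $e^{-u(\gamma,\gamma)/2}$ dominates the linear-in-$\gamma$ exponential term, so the series over $W\times Q^\vee$ converges absolutely, uniformly for $t$ in compact subsets of $\R_+$.

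The martingale property then follows by Fubini/dominated convergence: for $0\le s\le t$,
\[
\mathbb E\bigl[e^{-\frac{(y,y)t}{2}}\widehat\varphi_{d+y}(\tau_t,b_t)\bigm|\mathcal F_s\bigr] = \frac{1}{\pi(-y)}\sum_{w,\gamma}(-1)^w C_{w,\gamma}(u)\,\mathbb E[N^{w,\gamma}_t\mid\mathcal F_s] = e^{-\frac{(y,y)s}{2}}\widehat\varphi_{d+y}(\tau_s,b_s),
\]
since each $N^{w,\gamma}$ is a true martingale. The only real obstacle is the absolute summability estimate above; the rest reduces to an algebraic identity and the classical fact that $e^{(b_t,v)-\frac{1}{2}(v,v)t}$ is a martingale for every complex $v$. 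One could equivalently argue via It\^o's formula after checking term by term that $f(t,\tau,x)=e^{-(y,y)t/2}\widehat\varphi_{d+y}(\tau,x)$ satisfies $\partial_t f+\partial_\tau f+\tfrac{1}{2}\Delta_x f=0$ (the generator of the space-time Brownian motion kills $f$), but the series-of-martingales approach has the advantage of producing the \emph{true} (rather than local) martingale property directly. For $y$ such that $\pi(-y)=0$, the conclusion extends by the analyticity remark following \eqref{seconddef} (or by choosing the alternative expression for $\widehat\varphi_{d+y}$ mentioned in that remark).
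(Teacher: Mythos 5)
Your proof is correct and follows essentially the same route as the paper: both expand $\widehat\varphi_{d+y}(\tau_t,b_t)$ as a sum over the affine Weyl group $\widehat W=W\ltimes Q^\vee$, show each summand is a true martingale, pass to the sum by $L^1$ convergence, and treat the degenerate case $\pi(-y)=0$ by continuity/analyticity in $y$. The only difference is cosmetic: where the paper checks the space-time heat equation $(\tfrac12\Delta+\partial_t)g_w=0$ and upgrades the resulting local martingales using integrability of the quadratic variation, you identify each term directly as a deterministic constant times a complex exponential martingale of $b$ and make the $L^1$ summability explicit by a Gaussian computation.
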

\begin{proof} Let us suppose that $\pi(y)\ne 0$.   Choose an orthonormal basis $v_1,\dots,v_n$ of $\mathfrak{t}$ and consider    for $w\in \widehat W $ a function $g_w$ defined on $\R_+^*\times \R^{n}$ by $$g_w(t,x_1,\dots,x_n)=e^{( t d+x,w( d+iy))+\frac{t}{2}(y,y)},$$ 
where $x=x_1v_1+\dots+x_nv_n$. Letting $\Delta=\sum_{i=1}^n\partial_{x_ix_i}$,   the function $g_w$ satisfies 
\begin{align}\label{edp}(\frac{1}{2}\Delta+\partial_t) g_w= 0,\end{align}
which implies that $(g_w(t+u,b_t))_{t\ge 0}$ is a local martingale.  Its quadratic variation is easily shown to be integrable, so that, it is a true martingale. The sum $\sum_{w\in \widehat{W}}(-1)^wg_w(t+u,b_t)$ converges   for the $L^1$ norm, which implies that $$(e^{\frac{t}{2}(y,y)}\varphi_{ d+y}(t+u,b_t),t\ge 0),$$ is also a true martingale. Continuity in $y$ ensures that  lemma is true for any $y\in \mathfrak t \oplus i\mathfrak t$.
\end{proof}
\begin{lem} Let $t>0$. If $td+x\in \widehat{\mathcal C}$, i.e. $x\in tA$, then $\widehat\varphi_{ d}( t,x)\ge 0,$
with equality if and only if   $td+x$ is on the boundary of $\widehat{\mathcal C}$, i.e. $x$ is on the boundary of $t  A$.
\end{lem}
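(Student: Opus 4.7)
My approach splits the claim into two complementary facts: $\widehat\varphi_{d}(t,x)=0$ when $td+x$ lies on the boundary of $\widehat{\mathcal{C}}$, and $\widehat\varphi_{d}(t,x)>0$ otherwise. The boundary vanishing is essentially algebraic, coming from the Weyl antisymmetry of the defining sum; the interior positivity comes from the explicit series formula in proposition \ref{phi-char} once the sum is identified with a positive Brownian heat kernel on $K$.

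\textbf{Vanishing on the walls.} Starting from $\widehat\varphi_{d+y}(t,x)=\pi(-y)^{-1}\sum_{w\in\widehat W}(-1)^{w}e^{(w(td+x),\,d+y)}$, the numerator is Weyl-antisymmetric as a function of $td+x$: for any simple reflection $s\in\widehat W$, reindexing $w\mapsto ws$ together with the sign homomorphism $(-1)^{ws}=-(-1)^{w}$ gives $\sum_{w}(-1)^{w}e^{(ws(td+x),\,d+y)}=-\sum_{w}(-1)^{w}e^{(w(td+x),\,d+y)}$. Hence the numerator vanishes whenever $td+x$ is fixed by some simple reflection, i.e.\ on the walls of $\widehat{\mathcal{C}}$. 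Passing to $y\to 0$ through the well-defined extension provided by proposition \ref{phi-char}, we conclude that $\widehat\varphi_{d}(t,x)=0$ whenever $x\in\partial(tA)$.

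\textbf{Positivity in the interior.} Setting $y=0$ and $\sigma=1/t$, $x\leftrightarrow x/t$ in proposition \ref{phi-char} yields
\[
\widehat\varphi_{d}(t,x)\;=\;C\,(2\pi/t)^{n/2}\,e^{\|x\|^{2}/(2t)}\,\pi(x/t)\sum_{\mu\in P_{+}}\dim V(\mu)\,\ch_{\mu}(e^{x/t})\,e^{-(2\pi)^{2}\|\mu+\rho\|^{2}/(2t)}.
\]
I would recognize the character series as a Brownian-motion heat kernel on $K$: up to the Gaussian factor $e^{-(2\pi)^{2}\|\rho\|^{2}/(2t)}$, it equals $p^{1}_{1/t}(e,\,e^{x/t})$, which is strictly positive. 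The factor $\pi(x/t)=(2i)^{|R_{+}|}\prod_{\alpha\in R_{+}}\sin(\pi\alpha(x/t))$ contains the product $\prod\sin(\pi\alpha(x/t))$, which is strictly positive for $x/t$ in the interior of $A$ (since $\alpha(x/t)\in(0,1)$ by the very definition of $A$) and vanishes exactly on those walls where some $\alpha(x/t)\in\{0,1\}$. The remaining factors are real positive, so once the phase $(2i)^{|R_{+}|}$ is reconciled with the Poisson constant $C$, the product has the correct sign.

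\textbf{Main obstacle.} The delicate bookkeeping lies in reconciling the imaginary phase $(2i)^{|R_+|}$ in $\pi$ with the overall real-valuedness and sign of $\widehat\varphi_{d}$; this amounts to tracking the normalizations coming from the Poisson summation formula in proposition \ref{phi-char} with care. A more conceptual alternative, which avoids the phase bookkeeping entirely, is probabilistic: by lemma \ref{mart} the process $\widehat\varphi_{d}(\tau_s,b_s)$ is a true martingale under $\mathbb{W}_{x,u}$, so once boundary vanishing is established, optional stopping at the exit time $T$ from $\widehat{\mathcal{C}}$ yields $\widehat\varphi_{d}(u,x)=\mathbb{E}_{u,x}[\widehat\varphi_{d}(\tau_s,b_s)\mathbf{1}_{s<T}]$; identifying the $s\to\infty$ limit of this expectation with the minimal positive harmonic function of the space-time diffusion killed on $\partial\widehat{\mathcal{C}}$ would give non-negativity intrinsically and strict positivity in the interior.
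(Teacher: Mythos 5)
Your proposal is correct and takes essentially the same route as the paper: the paper's proof is exactly the specialization of Proposition \ref{phi-char} at $y=0$, writing $\widehat\varphi_{d}(\frac{1}{\sigma},\frac{z}{\sigma})=C\,p_1^\sigma(e^z)\,\pi(z)\times(\text{positive factors})$ and reading off both the sign from the strict positivity of the heat kernel and the equality case from the vanishing of $\pi(z)$ precisely on $\partial(tA)$ (your Weyl-antisymmetry argument for the walls is just another way of seeing that same vanishing). The phase $(2i)^{\vert R_+\vert}$ that you single out as the main obstacle is glossed over in the paper's one-line proof as well; it is a fixed overall constant, harmless where the lemma is used since only ratios of $\widehat\varphi_{d}$ enter the Doob transform, while your probabilistic alternative is only a sketch (identifying the minimal positive harmonic function would itself require the positivity being proved) and is not needed.
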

\begin{proof}
Proposition \ref{phi-char}  implies  that for any $z\in \mathfrak k$,
 
$$\widehat\varphi_{ d}(\frac{1}{\sigma},\frac{z}{\sigma})=C \times p_1^\sigma(e^z)\pi(z)e^{\frac{1}{2\sigma}(2\pi)^2\vert\vert z\vert\vert^2-\frac{\sigma}{2}(2\pi)^2\vert\vert \rho\vert\vert^2},$$
where $C$ is a positive constant, which ensures in particular that 
$$\widehat\varphi_{d}( t,z)\ge 0,$$
for $z\in tA$, with equality if and only if $z\in t\partial A$. 
\end{proof} 
Let $(\mathcal F_t)_{t\ge 0}$ be the natural filtration of $(x_t)_{t\ge 0}$.   As $(\widehat{\varphi}_{ d}(\tau_{t\wedge T},b_{t\wedge T}),t\ge 0),$ is a positive true martingale under $\mathbb{W}_{x,u}$ such that  $\widehat\varphi_{ d}( T,b_{T})=0$, one defines a measure $\mathbb{Q}_{x,u}$ on $\mathcal F_\infty$ as below.
\begin{defn} Let $u>0$ and  $ x\in  \mathfrak{t}$ such that $ ud+x$ is  in the interior of $\widehat{\mathcal C}$, i.e. $\frac{x}{u}$ in the interior of $A$. One defines a probability $\Q_{u,x}$ letting 
$$\Q_{u,x}(B)=\E_{\mathbb{W}_{u,x}}(\frac{\widehat\varphi_{ d}(\tau_t,b_t)}{\widehat\varphi_{ d}( u,x)}1_{T\ge t,\, B}), \textrm{ for } B\in \mathcal F_t,\, t\ge 0.$$
\end{defn}  
Actually  this conditioned Doob process  can be obtained as a limit of a space time Brownian motion starting in the affine Weyl chamber, with drift  within the affine  Weyl chamber,  conditioned to remain forever in it, when the drift goes to zero  (see \cite{Defosseux1} and \cite{Defosseux2} for more details). Lemma \ref{mart} and the fact that for  $y\in \mathfrak t\oplus i\mathfrak t$, $\widehat\varphi_{d+y}(\tau_T,b_T)=0$ imply immediately the following proposition.
\begin{prop}\label{phiQ} For $r,t,u\in \R_+^*$, $x$ in the interior of $u{A}$, and $y\in \mathfrak t\oplus i\mathfrak t$, one has 
\begin{align}\label{continuous}
\E_{\mathbb{Q}_{u,x}}(\frac{\widehat\varphi_{d+y}(\tau_t,b_t)}{\widehat\varphi_{d}(\tau_t,b_t)})=\frac{\widehat\varphi_{d+y}(u,x)}{\widehat\varphi_{d}( u,x)}e^{\frac{(y,y)}{2}t}.
\end{align}
and
\begin{align}\label{continuouscond}
E_{\mathbb{Q}_{x,u}}(\frac{\widehat\varphi_{d+y}( \tau_{r+t},b_{t+r})}{\widehat\varphi_{d}( \tau_{t+r},b_{t+r})}\vert \mathcal{F}_r)=\frac{\widehat\varphi_{ d+y}(\tau_r,b_r)}{\widehat\varphi_{d}(\tau_r,b_r)}e^{\frac{(y,y)}{2}t}.
\end{align}
\end{prop}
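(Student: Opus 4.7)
The plan is to derive both identities from Lemma \ref{mart} combined with the vanishing of $\widehat\varphi_{d+y}$ on the boundary of $\widehat{\mathcal C}$. First I would unfold the definition of $\Q_{u,x}$ applied to the $\mathcal F_t$-measurable ratio $\widehat\varphi_{d+y}(\tau_t,b_t)/\widehat\varphi_d(\tau_t,b_t)$:
\begin{equation*}
\E_{\Q_{u,x}}\!\Bigl(\frac{\widehat\varphi_{d+y}(\tau_t,b_t)}{\widehat\varphi_d(\tau_t,b_t)}\Bigr)=\frac{1}{\widehat\varphi_d(u,x)}\,\E_{\mathbb{W}_{u,x}}\!\bigl(\widehat\varphi_{d+y}(\tau_t,b_t)\,\mathbf{1}_{T\ge t}\bigr).
\end{equation*}
By Lemma \ref{mart} the process $M_s:=e^{-(y,y)s/2}\widehat\varphi_{d+y}(\tau_s,b_s)$ is a true $\mathbb{W}_{u,x}$-martingale; the stopped process $(M_{s\wedge T})_{s\le t}$ is bounded because on $\{s\le T\}$ one has $\tau_s\in[u,u+t]$ and $b_s\in\tau_sA\subset(u+t)A$, so optional stopping gives $\E_{\mathbb{W}_{u,x}}(M_{t\wedge T})=\widehat\varphi_{d+y}(u,x)$.

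The crucial input is the boundary identity $\widehat\varphi_{d+y}(\tau_T,b_T)=0$. I would verify it from the alternating-sum representation
\begin{equation*}
\widehat\varphi_{d+y}(a,x)=\frac{1}{\pi(-y)}\sum_{w\in\widehat W}(-1)^w e^{(w(ad+x),\,d+y)},
\end{equation*}
by noting that at the hitting time $T$ the vector $\tau_Td+b_T$ lies on a wall of $\widehat{\mathcal C}$ and so is fixed by some simple reflection $s\in\widehat W$; pairing $w$ with $ws$ in the sum yields term-by-term cancellation. With this,
\begin{equation*}
M_{t\wedge T}=e^{-(y,y)t/2}\,\widehat\varphi_{d+y}(\tau_t,b_t)\,\mathbf{1}_{T\ge t}\quad\text{almost surely},
\end{equation*}
so the optional-stopping identity rearranges to $\E_{\mathbb{W}_{u,x}}(\widehat\varphi_{d+y}(\tau_t,b_t)\mathbf{1}_{T\ge t})=e^{(y,y)t/2}\widehat\varphi_{d+y}(u,x)$, which, plugged into the first display, gives (\ref{continuous}).

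For the conditional identity (\ref{continuouscond}) I would apply the same analysis after time $r$ using the Markov property. For any $B\in\mathcal F_r$, the definition of $\Q_{u,x}$ on $\mathcal F_{r+t}$ gives
\begin{equation*}
\E_{\Q_{u,x}}\!\Bigl(\mathbf{1}_B\,\frac{\widehat\varphi_{d+y}(\tau_{r+t},b_{r+t})}{\widehat\varphi_d(\tau_{r+t},b_{r+t})}\Bigr)=\frac{1}{\widehat\varphi_d(u,x)}\E_{\mathbb{W}_{u,x}}\!\bigl(\mathbf{1}_B\,\widehat\varphi_{d+y}(\tau_{r+t},b_{r+t})\,\mathbf{1}_{T\ge r+t}\bigr).
\end{equation*}
On $\{T\ge r\}$ the shifted process restarts from $(\tau_r,b_r)$ and its hitting time is $T-r$, so applying the first identity to this restarted process yields
\begin{equation*}
\E_{\mathbb{W}_{u,x}}\!\bigl(\widehat\varphi_{d+y}(\tau_{r+t},b_{r+t})\mathbf{1}_{T\ge r+t}\,\big|\,\mathcal F_r\bigr)=e^{(y,y)t/2}\,\widehat\varphi_{d+y}(\tau_r,b_r)\,\mathbf{1}_{T\ge r}.
\end{equation*}
Recognizing $\mathbf{1}_{T\ge r}\widehat\varphi_d(\tau_r,b_r)/\widehat\varphi_d(u,x)$ as the density of $\Q_{u,x}\vert_{\mathcal F_r}$ with respect to $\mathbb{W}_{u,x}$ converts the right side into $e^{(y,y)t/2}\E_{\Q_{u,x}}(\mathbf 1_B\,\widehat\varphi_{d+y}(\tau_r,b_r)/\widehat\varphi_d(\tau_r,b_r))$, and since $B$ is arbitrary this is exactly (\ref{continuouscond}). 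The only non-routine step in the whole argument is the boundary-vanishing check for $\widehat\varphi_{d+y}$; the rest is a standard combination of optional stopping, the Markov property, and the h-transform definition of $\Q_{u,x}$.
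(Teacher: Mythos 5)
Your proposal is correct and follows essentially the paper's own route: the paper's proof consists precisely of the remark that Lemma \ref{mart} together with the vanishing $\widehat\varphi_{d+y}(\tau_T,b_T)=0$ "imply immediately" the proposition, and you simply spell out the optional-stopping, change-of-measure and Markov-property steps, plus a reflection-pairing justification of the boundary vanishing. The only minor point is that the pairing argument as written uses the alternating-sum representation with the factor $1/\pi(-y)$, so for $y$ with $\pi(-y)=0$ it should be completed by continuity (or analytic continuation) in $y$, exactly as in the paper's proof of Lemma \ref{mart}.
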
 
\section{Conditioned space time brownian motion, and radial part of a Brownian sheet}\label{cond-rad}
 In this last section we prove that the  conditioned Doob process in an affine Weyl chamber previously introduced, has the same law as the radial part process of a Brownian sheet on $\mathfrak k$. It is stated in theorem \ref{maintheo}.   Let $(x_s^t)_{s\in [0,1],t\in \R_+}$ be  a standard  Brownian sheet  on $\mathfrak k$, i.e. for any $t,t'\in \R$, $(x_s^{t'+t}-x_s^{t})_{ s\in [0,1]}$ is a $\mathfrak k$-valued  random process independent of $\sigma(x_s^r, s\in [0,1],r\le t)$, having the  same law as $(x_s^{t'})_{s\in [0,1]}$, which is a standard Brownian motion on $\mathfrak k$, with variance $t'$.    In the sequel we choose a continuous version of it.
Proposition \ref{pointconvergence} and corollary \ref{entrancelaw} prove the existence of an entrance law for the conditioned process in the affine Weyl chamber introduced in section \ref{h-process}, and the entrance point $0$.

\begin{lem} \label{convergence} For any $y\in \mathfrak t\oplus i\mathfrak t$,
$$\frac{\widehat\varphi_{ d+y}(u,x)}{\widehat\varphi_{ d}(u,x)}$$
converges towards $1$ when  $(u,x)$ goes to $0$ within the affine Weyl chamber.
\end{lem}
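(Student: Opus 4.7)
The plan is to apply Proposition \ref{phi-char} with $\sigma = 1/u$ and parameterize points of the interior of $uA$ as $x = u\bar x$ with $\bar x$ in the interior of $A$. The proposition then rewrites
\[
\widehat\varphi_{d+y}(u,x) = C\bigl(\tfrac{2\pi}{u}\bigr)^{n/2}\, e^{\frac{u(y,y)}{2} + \frac{(x,x)}{2u}}\, \pi(\bar x) \sum_{\mu \in P_+} \ch_\mu(e^{\bar x})\,\ch_\mu(e^{-y})\, e^{-\frac{(2\pi)^2\|\mu+\rho\|^2}{2u}},
\]
and the same identity with $y$ replaced by $0$ for the denominator. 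All the $y$-independent prefactors ($C$, $(2\pi/u)^{n/2}$, $e^{(x,x)/(2u)}$, $\pi(\bar x)$) cancel in the ratio, leaving
\[
\frac{\widehat\varphi_{d+y}(u,x)}{\widehat\varphi_{d}(u,x)} = e^{\frac{u(y,y)}{2}} \cdot \frac{\sum_{\mu\in P_+} \ch_\mu(e^{\bar x})\,\ch_\mu(e^{-y})\, e^{-\frac{(2\pi)^2\|\mu+\rho\|^2}{2u}}}{\sum_{\mu\in P_+} \ch_\mu(e^{\bar x})\,\dim V(\mu)\, e^{-\frac{(2\pi)^2\|\mu+\rho\|^2}{2u}}}.
\]

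Next, since $\mu = 0$ is the unique minimizer of $\|\mu+\rho\|^2$ on $P_+$, I would extract the common factor $e^{-(2\pi)^2\|\rho\|^2/(2u)}$ and the $\mu=0$ contribution (which equals $1$ in both sums, because $\ch_0 \equiv 1$ and $\dim V(0)=1$). The ratio then becomes
\[
e^{\frac{u(y,y)}{2}} \cdot \frac{1+R_y(u,\bar x)}{1+R_0(u,\bar x)}, \qquad R_y(u,\bar x) = \sum_{\mu\in P_+\setminus\{0\}} \ch_\mu(e^{\bar x})\,\ch_\mu(e^{-y})\, e^{-\frac{(2\pi)^2[(\mu,\mu)+2(\mu,\rho)]}{2u}},
\]
and $R_0$ is the analogous sum with $\ch_\mu(e^{-y})$ replaced by $\dim V(\mu)$. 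It is then enough to show that both $R_y(u,\bar x)$ and $R_0(u,\bar x)$ tend to $0$ uniformly in $\bar x \in A$ as $u \to 0^+$.

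For the uniform tail bound, the compactness of $A$ and $e^{\bar x}\in K$ give $|\ch_\mu(e^{\bar x})| \leq \dim V(\mu)$ uniformly in $\bar x$. Writing $y = y_1 + iy_2$ with $y_1,y_2\in \mathfrak t$ and using that every weight $\lambda$ of $V(\mu)$ satisfies $\|\lambda\|\leq \|\mu\|$, one gets $|\ch_\mu(e^{-y})|\leq \dim V(\mu)\, e^{\|\mu\|\,\|y_1\|}$. Since $\dim V(\mu)$ is polynomial in $\|\mu\|$ by the Weyl dimension formula while $(\mu,\mu) + 2(\mu,\rho) \geq c\|\mu\|^2$ on $P_+\setminus\{0\}$ for some $c > 0$, the Gaussian factor eventually dominates, and each remainder series is $O(e^{-\eta/u})$ uniformly in $\bar x\in A$ for some $\eta > 0$. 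Combined with $e^{u(y,y)/2}\to 1$, this yields the desired limit. The only slightly delicate point is arranging the uniform Gaussian domination over the infinite sum, but this follows routinely from the estimates above; there is no real obstacle beyond bookkeeping.
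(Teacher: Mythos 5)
Your proof is correct and takes essentially the same route as the paper: both rewrite the ratio via Proposition \ref{phi-char}, cancel the $y$-independent prefactors, and show the $\mu=0$ term dominates as $u\to 0$ uniformly over $\bar x\in A$ (the paper simply cites Lemma 13.13 of \cite{Kac} for this domination, whereas you verify it by elementary character and Gaussian estimates). One harmless slip: with the paper's normalization $\ch_\mu(e^{-y})=\sum_\lambda m_\lambda e^{-2i\pi\lambda(y)}$, the exponential growth in $\|\mu\|$ is governed by the imaginary part of $y$ (with a factor $2\pi$) rather than by $y_1$, but since all that matters is a bound of the form $\dim V(\mu)\,e^{c_y\|\mu\|}$, the argument is unaffected.
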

\begin{proof}
\begin{align*}  
\widehat\varphi_{ d+y}(u,x)&=c(\frac{ u}{2\pi})^{-n/2}e^{\frac{u}{2 }(y,y)+\frac{1}{2u}(x,x)}\pi(\frac{x}{u})\sum_{\mu\in P_{+}}\ch_{\mu}(e^{\frac{x}{u}})\ch_{\mu}(e^{-y})e^{-\frac{1}{2u}(2\pi)^2\vert\vert\mu+\rho\vert\vert^2},
\end{align*} 
Lemma 13.13 of \cite{Kac} implies that the dominant term in the sum of the right hand side of the identity is 
 $e^{-\frac{1}{2u}(2\pi)^2\vert\vert\rho\vert\vert^2},$
and proposition follows.
\end{proof}

\begin{lem}\label{conv1} Let us fix $t>0$. If $\mu_{x,u}$ is the law of $\frac{b_t}{t+u}$ under $\Q_{x,u}$ then for any $y\in \mathfrak t\oplus i\mathfrak t$,
$$\int_A\frac{\widehat\varphi_{ d+y}(t,tz)}{\widehat\varphi_{ d}(t,tz)}\, \mu_{x,u}(dz)$$
converges towards $e^{t\frac{(y,y)}{2}}$ when $(x,u)$ goes to $0$ within the affine Weyl chamber.
\end{lem}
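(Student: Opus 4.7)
The plan is to apply Proposition \ref{phiQ} after a change of variables, and then to replace the parameter $t+u$ by $t$ in the integrand through a uniform continuity argument. Under $\Q_{x,u}$ the time component satisfies $\tau_t=t+u$, and $\mu_{x,u}$ is by definition the law of $b_t/(t+u)$; the substitution $b_t=(t+u)z$ therefore turns
\begin{align*}
\E_{\Q_{x,u}}\left[\frac{\widehat\varphi_{d+y}(\tau_t,b_t)}{\widehat\varphi_d(\tau_t,b_t)}\right] \quad\text{into}\quad \int_A \frac{\widehat\varphi_{d+y}(t+u,(t+u)z)}{\widehat\varphi_d(t+u,(t+u)z)}\,\mu_{x,u}(dz),
\end{align*}
and by Proposition \ref{phiQ} and Lemma \ref{convergence} this integral converges to $e^{\frac{(y,y)}{2}t}$ as $(x,u)\to 0$ in the affine Weyl chamber. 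The lemma therefore reduces to controlling the error incurred by replacing $t+u$ by $t$ inside the ratio.

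The natural way to proceed is to show that the map $F_y(s,z):=\widehat\varphi_{d+y}(s,sz)/\widehat\varphi_d(s,sz)$, a priori defined only for $s>0$ and $z$ in the interior of $A$, extends continuously to $(0,\infty)\times\overline{A}$. Using Proposition \ref{phi-char}, one rewrites it as
\begin{align*}
F_y(s,z)=e^{\frac{s}{2}(y,y)}\,\frac{\sum_{\mu\in P_+}\ch_\mu(e^z)\,\ch_\mu(e^{-y})\,e^{-\frac{(2\pi)^2}{2s}\|\mu+\rho\|^2}}{\sum_{\mu\in P_+}\dim V(\mu)\,\ch_\mu(e^z)\,e^{-\frac{(2\pi)^2}{2s}\|\mu+\rho\|^2}},
\end{align*}
after cancelling the common factor $\pi(z)$ from both series. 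For $y$ ranging over a compact set the Gaussian term dominates the polynomial growth of $\dim V(\mu)$, $|\ch_\mu(e^z)|$ and $|\ch_\mu(e^{-y})|$, so both series converge absolutely and uniformly on compact subsets of $(0,\infty)\times\overline{A}$; moreover, up to a strictly positive smooth factor, the denominator coincides with the heat kernel $p_1^{1/s}(e^z)$ that appears in the proof of the lemma preceding Proposition \ref{phiQ}, and is therefore strictly positive throughout $\overline{A}$. Hence $F_y$ is continuous on $(0,\infty)\times\overline{A}$, and uniformly continuous on any compact slab $[t/2,2t]\times\overline{A}$.

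Uniform continuity then gives $\sup_{z\in A}|F_y(t+u,z)-F_y(t,z)|\to 0$ as $u\to 0$, so---because $\mu_{x,u}$ is a probability measure on $A$---the integrals of $F_y(t+u,\cdot)$ and $F_y(t,\cdot)$ against $\mu_{x,u}$ differ by $o(1)$. Combined with the first step this proves the lemma. The main obstacle is the continuous extension of $F_y$ up to $\partial A$: on the boundary the original quotient is of the form $0/0$ because of the shared factor $\pi(z)$, and only the representation of Proposition \ref{phi-char} together with the positivity of the heat kernel on $K$ make both the cancellation and the non-vanishing of the limiting denominator transparent.
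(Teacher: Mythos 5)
Your proposal is correct and follows essentially the same route as the paper: you invoke Proposition \ref{phiQ} together with Lemma \ref{convergence} to identify the limit of $\int_A \widehat\varphi_{d+y}(t+u,(t+u)z)/\widehat\varphi_{d}(t+u,(t+u)z)\,\mu_{x,u}(dz)$, and then replace $t+u$ by $t$ in the integrand uniformly in $z\in A$ by means of Proposition \ref{phi-char}. Your continuity argument (cancelling the common factor $\pi(z)$, bounding the series by the Gaussian decay, and using strict positivity of the heat kernel on $\overline{A}$) is simply a detailed justification of the uniform convergence the paper asserts in one sentence via the bound $\vert \ch_\mu(e^z)\vert\le h(\mu+\rho)/h(\rho)$.
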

\begin{proof}If $\mu_{x,u}$ is the law of $\frac{b_t}{t+u}$ under $\mathbb Q_{x,u}$ then 
$$\int_A\frac{\widehat\varphi_{d+y}(t+u,(t+u)z)}{\widehat\varphi_{d}(t+u,(t+u)z)}\, d\mu_{x,u}(z)=\frac{\widehat\varphi_{ d+y}(x,u)}{\widehat\varphi_{ d}(x,u)}e^{\frac{t}{2}(y,y)}.$$
Proposition \ref{phi-char} and the fact that $\vert ch_\mu(e^z)\vert \le\frac{h(\mu+\rho)}{h(\rho)}$ imply that   
$\frac{\widehat\varphi_{d+y}((t+u),(t+u)z)}{\widehat\varphi_{d}(t+u,(t+u)z)}$ converges to $\frac{\widehat\varphi_{ d+y}(t,tz)}{\widehat\varphi_{d}(t,tz)}$, uniformly in $z\in A$, when $u$ goes to $0$. Thus lemma follows from lemma \ref{convergence} 
\end{proof}

\begin{prop}\label{pointconvergence}
Let $(x_{s}^t)_{s\in[01],t\in \R_+}$ be a standard Brownian sheet on $\mathfrak k$ and $t>0$. Under $\mathbb{Q}_{u,x}$, $\frac{b_t}{t+u}$ converges in law towards the  radial part of $(\frac{1}{t}x_{s}^t)_{s\in [0,1]}$, when $(x,u)$ goes to $0$ within the affine Weyl chamber.
\end{prop}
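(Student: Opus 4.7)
The strategy is to match a rich family of integral transforms of $b_t/(t+u)$ under $\mathbb Q_{u,x}$ with those of $\rad((1/t)x^t)$, both being probability measures on the compact alcove $A$, and to conclude via a character-based separation argument. I would first apply Theorem \ref{endpoint} to $x^\sigma := (1/t)\,x^t$, which is a Brownian motion on $\mathfrak k$ with variance $\sigma = 1/t$. Since $(1/\sigma)(y,x_1^\sigma) = (y,x_1^t)$ and $\rad(x^\sigma) = \rad((1/t)x^t)$, the theorem yields, for every $y \in \mathfrak t$,
\begin{equation*}
\E\bigl(e^{(y, x_1^t)}\,\big|\, \rad((1/t)x^t)=z\bigr) \;=\; \frac{\widehat\varphi_{d+y}(t,tz)}{\widehat\varphi_d(t,tz)} \;=:\; f_y(z).
\end{equation*}
Writing $\nu$ for the law of $\rad((1/t)x^t)$ on $A$, the tower property then gives $\int_A f_y\, d\nu = e^{t(y,y)/2}$. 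On the other hand, Lemma \ref{conv1} asserts exactly that $\int_A f_y\, d\mu_{x,u} \to e^{t(y,y)/2}$ as $(x,u)\to 0$ within $\widehat{\mathcal C}$, for every $y \in \mathfrak t \oplus i\mathfrak t$, where $\mu_{x,u}$ is the law of $b_t/(t+u)$ under $\mathbb Q_{u,x}$. So $\int f_y\,d\mu_{x,u} \to \int f_y\,d\nu$.

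Next, since $A$ is compact, the family $(\mu_{x,u})$ is tight, and I would let $\mu^\star$ denote any weak subsequential limit. Proposition \ref{phi-char} shows that the factor $\pi(z)\,e^{t(z,z)/2}$ cancels between numerator and denominator of $f_y$, giving
\begin{equation*}
f_y(z) \;=\; \frac{e^{t(y,y)/2}\sum_{\mu\in P_+}\ch_\mu(e^z)\,\ch_\mu(e^{-y})\,e^{-c_\mu}}{W(z)},\qquad W(z) := \sum_{\mu\in P_+}\dim V(\mu)\,\ch_\mu(e^z)\,e^{-c_\mu},
\end{equation*}
with $c_\mu = (2\pi)^2\|\mu+\rho\|^2/(2t)$. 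Comparing with the character expansion of the heat kernel on $K$ identifies $W(z)$ with $e^{-(2\pi)^2\|\rho\|^2/(2t)}\,p_1^{1/t}(e^z)$, which is strictly positive on $A$; hence $f_y$ is bounded and continuous on $A$, and one may pass to the weak limit to obtain $\int f_y\,d\mu^\star = \int f_y\,d\nu = e^{t(y,y)/2}$ for every $y\in\mathfrak t$.

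Finally, dividing by $e^{t(y,y)/2}$ turns this into $\sum_{\mu\in P_+} \ch_\mu(e^{-y})\, e^{-c_\mu}\,\int_A \ch_\mu(e^z)\,dm(z)/W(z) = 1$ for $m=\mu^\star$ and for $m=\nu$; the linear independence of the functions $y\mapsto \ch_\mu(e^{-y})$ on the maximal torus then forces $\int_A \ch_\mu(e^z)\,d\mu^\star(z)/W(z) = \int_A \ch_\mu(e^z)\,d\nu(z)/W(z)$ for every $\mu\in P_+$. Since the class functions $z\mapsto \ch_\mu(e^z)$, $\mu\in P_+$, span a dense subspace of $C(K/\Ad K) \simeq C(A)$ by Peter--Weyl, the measures $d\mu^\star/W$ and $d\nu/W$ agree, whence $\mu^\star = \nu$; uniqueness of the subsequential limit upgrades the convergence of integrals to weak convergence of $\mu_{x,u}$ to $\nu$. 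The main obstacle is precisely this last separation step, because the identity $\int f_y\,dm = e^{t(y,y)/2}$ alone does \emph{not} characterize $m$ (many different mixtures of the conditional law of $x_1^t$ given $\rad$ reproduce the correct Gaussian marginal); Proposition \ref{phi-char} rescues the argument by exhibiting $f_y$ as a series in the linearly independent characters $\ch_\mu(e^{-y})$, whose coefficients encode the integrals $\int \ch_\mu/W\,dm$.
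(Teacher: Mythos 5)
Your proof is correct, but it is organized differently from the paper's. Both arguments start identically: Theorem \ref{endpoint} applied to $x^\sigma=\frac{1}{t}x^t$ (so $\sigma=\frac1t$) identifies $\int_A\frac{\widehat\varphi_{d+y}(t,tz)}{\widehat\varphi_{d}(t,tz)}\,d\nu(z)=e^{\frac t2(y,y)}$ for the law $\nu$ of $\rad(\frac1t x^t)$, and Lemma \ref{conv1} gives the same limit for $\mu_{x,u}$. The paper then avoids any compactness argument: it expands an arbitrary smooth test function $u$ on $A$ via Peter--Weyl in the twisted basis $\ch_\lambda(z)\pi(z)e_t(z)/\widehat\varphi_d(t,tz)$ with rapidly decreasing coefficients, and observes from (\ref{seconddef}) that these basis functions are exactly (up to the constant $\pi(-2\pi\frac{i}{t}(\lambda+\rho))$) the ratios $\widehat\varphi_{d+y_\lambda}/\widehat\varphi_d$ at the special purely imaginary points $y_\lambda=2\pi\frac{i}{t}(\lambda+\rho)$, so Lemma \ref{conv1} applied at these $y_\lambda$ plus domination gives $\int u\,d\mu_{x,u}\to\int u\,d\nu$ directly. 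You instead read the expansion of Proposition \ref{phi-char} in the $y$-variable: tightness on the compact alcove, a subsequential limit $\mu^\star$, matching of the Laplace-type functionals for real $y$, and identification of $\mu^\star=\nu$ by equating the coefficients $\int_A\ch_\mu(e^z)W(z)^{-1}\,dm(z)$ followed by Peter--Weyl density in $C(A)$; your identification of $W$ with the heat kernel (hence its strict positivity) is the same fact the paper uses to see that $z\mapsto\widehat\varphi_d(t,tz)/(\pi(z)e_t(z))$ is smooth and positive. What your route buys is that you never need to expand general smooth functions nor the rapid-decay estimate on the coefficients $c_\lambda$; what it costs is the subsequence step and one point you should tighten: with infinitely many terms, ``linear independence'' of the characters $y\mapsto\ch_\mu(e^{-y})$ does not by itself let you equate coefficients of two series. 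You should note that the Gaussian factors $e^{-c_\mu}$ together with $\vert\ch_\mu\vert\le\dim V(\mu)$ and $\inf_A W>0$ make both series converge absolutely and uniformly, so they define continuous class functions on $K$ that coincide on $T$ (hence on $K$), and then orthogonality of irreducible characters in $L^2(K)$ gives the term-by-term equality; with that standard fix your argument is complete.
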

\begin{proof} Let $t>0$. By analytical continuation, the second identity of theorem \ref{endpoint} remains valid for any $y\in \mathfrak t\oplus i\mathfrak t$. As $(\frac{1}{t}x_{s}^t)_{s\in [0,1]}$ is a standard Brownian motion on $\mathfrak k$ with variance $\frac{1}{t}$, theorem \ref{endpoint} gives for $z\in A$, $y\in\mathfrak t\oplus i\mathfrak t$,
$$\E(e^{(x_{1}^t,y)}\vert \rad((\frac{x_{s}^t}{t})_s)=z)=\frac{\widehat\varphi_{d+y}(t,tz)}{\widehat\varphi_{d}(t,tz)}.$$
In particular  the law $\mu$ of $\rad((\frac{x_{s}^t}{t})_s)$  satisfies, for any $y\in\mathfrak t\oplus i\mathfrak t$,
$$\int_A\frac{\widehat\varphi_{d+y}(t,tz)}{\widehat\varphi_{d}(t,tz)}\, d\mu(z) =\E(e^{(x_{1}^t,y)})=e^{\frac{t}{2}(y,y)}.$$Proposition \ref{phi-char} and the fact that $\vert ch_\mu(e^z)\vert \le\frac{h(\mu+\rho)}{h(\rho)}$   imply      that 
$$z\in A\mapsto \frac{\widehat\varphi_{d}(t,zt)}{\pi(z)e_t(z)},$$ 
where $e_t(z)=\sum_{\gamma\in \nu(Q^\vee)} e^{-t(\gamma(z)+\frac{1}{2}(\gamma,\gamma))}$, is smooth on $A$. Thus the Peter-Weyl theorem ensures that for any smooth function $u$ defined on $A$, and $z\in A$, 
$$ u(z)\frac{\widehat\varphi_d(t,zt)}{\pi(z)e_t(z)}=\sum_{\lambda\in P_+}c_\lambda \ch_\lambda(z),$$
where $c_\lambda=\int_Au(z)\frac{\widehat\varphi_d(t,zt)}{\pi(z)e_t(z)}\ch_\lambda(z)\pi^2(z)\, dz$, and the convergence stands uniformly and absolutely. Actually $\lim_{\vert\lambda\vert\to\infty}(\lambda,\lambda)^nc_\lambda=0$ for all $n\in \N$. As  $\frac{\widehat\varphi_d(t,zt)}{\pi(z)e(z)}$ remains positive on $A$,
$$ u(z)=\sum_{\lambda\in P_+}c_\lambda \ch_\lambda(z)\frac{\pi(z)e_t(z)}{\widehat\varphi_d(t,zt)}.$$
As $\vert \ch_\lambda(z)\vert\le d(\lambda+\rho)$ and $\pi(.)e(.)/\widehat\varphi_d(t,t.)$ is bounded on $A$, the uniform and absolute convergence implies that for any probability measure $\nu$ on $A$, 
\begin{align}\label{idnu}
\int_Au(z)\nu(dz)=\sum_{\lambda\in P_+}c_\lambda\int_A \ch_\lambda(z)\frac{\pi(z)e_t(z)}{\widehat\varphi_d(t,zt)}\nu(dz).
\end{align}
Expression (\ref{seconddef}) for the definition of  $\widehat\varphi_{ d+y}$ gives, $$\widehat\varphi_{ d+2\pi\frac{i}{t}(\lambda+\rho)}(t,tz)=\frac{1}{\pi(-2\pi\frac{i}{t}(\lambda+\rho))}\ch_\lambda(z)\pi(z)e_t(z),$$ for any $\lambda\in P_+$. Thus one has 
\begin{align*}
\int_A \ch_\lambda(z)\frac{\pi(z)e(z)}{\widehat\varphi_d(t,zt)}d\mu(z)
&=\pi(-2\pi\frac{i}{t}(\lambda+\rho))\int_A\frac{\widehat\varphi_{d+2\pi\frac{i}{t}(\lambda+\rho)}(t,tz)}{\widehat\varphi_{d}(t,tz)}\, d\mu(z) \\
&=\pi(-2\pi\frac{i}{t}(\lambda+\rho))e^{-\frac{t}{2}(2\pi)^2(\lambda+\rho,\lambda+\rho)},
\end{align*}
and identity (\ref{idnu})  becomes for $\nu=\mu$,
$$\int_A u(z)\, d\mu(z)=\sum_{\lambda\in P_+}c_\lambda \pi(-2\pi\frac{i}{t}(\lambda+\rho))e^{-\frac{t}{2}(2\pi)^2(\lambda+\rho,\lambda+\rho)}.$$ 
Identity (\ref{idnu})  becomes for $\nu=\mu_{x,u}$ 
$$\int_A u(z)\, d\mu_{x,u}(z)=\sum_{\lambda\in P_+}c_\lambda  \int_A  \ch_\lambda(z)\frac{\pi(z)e(z)}{\widehat\varphi_d(t,zt)}\, d\mu_{x,u}(z).$$
As
\begin{align*}
\int_A \ch_\lambda(z)\frac{\pi(z)e(z)}{\widehat\varphi_d(t,zt)}d\mu_{x,u}(z)
&=\pi(-2\pi\frac{i}{t}(\lambda+\rho))\int_A\frac{\widehat\varphi_{ d+2\pi\frac{i}{t}(\lambda+\rho)}(t,tz)}{\widehat\varphi_{d}(t,tz)}\, d\mu(z) 
\end{align*}
lemma \ref{conv1} implies that
$$\int_Au(z)d\mu_{x,u}(z)\textrm{ converges towards } \int_Au(z)d\mu(z),$$
as $(x,u)$ goes to $0$ within the affine Weyl chamber, which proves the proposition.
\end{proof}
\begin{cor}  \label{entrancelaw} For any $t>0$, 
$$\lim_{(x,u)\to 0}\Q_{x,u}(b_t\in dz)=C_t\widehat\varphi_{ d}(t, z)\pi(\frac{z}{t})1_A(\frac{z}{t})\mathbb W_0(b_t\in dz),$$
when $(x,u)$ goes to $0$ within the affine Weyl chamber.
\end{cor}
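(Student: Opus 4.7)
The plan is to reduce this corollary to Proposition~\ref{pointconvergence} together with a direct identification of the density of the limiting radial part. Proposition~\ref{pointconvergence} tells us that under $\Q_{x,u}$ the variable $b_t/(t+u)$ converges in law to $R:=\rad((x_s^t/t)_{s\in[0,1]})$ as $(x,u)\to 0$ within the affine Weyl chamber. Because $t+u\to t$, this gives weak convergence of $b_t$ under $\Q_{x,u}$ to $tR$, so the only remaining task is to compute the law of $tR$ on $\mathfrak t$ and check that it matches the right-hand side of the corollary.

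To obtain the density of $R$, I would note that $(x_s^t/t)_{s\in[0,1]}$ is a Brownian motion on $\mathfrak k$ with variance $1/t$, so $\epsilon(x_\cdot^t/t)_1$ is a $K$-valued random variable with density $p_1^{1/t}$ with respect to Haar measure. By definition $R\in A$ is the radial part of this variable. Disintegration along the conjugation-invariant map $k\mapsto\rad(k)$ together with the Weyl integration formula (whose Jacobian at $r\in A$ is a positive constant times $|\pi(r)|^2$) gives
\[
\P(R\in dr)=c_W\,|\pi(r)|^2\Bigl(\int_K p_1^{1/t}(u e^r u^*)\,du\Bigr)1_A(r)\,dr.
\]
Lemma~\ref{endorbit} applied with $k_1=e$, $k_2=e^r$ collapses the orbital integral to $p_1^{1/t}(e^r)$, so after the change of variable $z=tr$ the density of $tR$ at $z$ is $c_W\,t^{-n}|\pi(z/t)|^2\,p_1^{1/t}(e^{z/t})\,1_A(z/t)$.

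It then remains to recognise this expression as the proposed limit. Here I would specialise Proposition~\ref{phi-char} to $y=0$, $\sigma=1/t$, $x=z/t$, which after folding the resulting character sum into $p_1^{1/t}(e^{z/t})$ gives an identity of the shape
\[
\widehat\varphi_d(t,z)=C\,(2\pi/t)^{n/2}\,e^{(z,z)/(2t)}\,e^{-(2\pi)^2\|\rho\|^2/(2t)}\,\pi(z/t)\,p_1^{1/t}(e^{z/t}).
\]
Hence $\pi(z/t)\,p_1^{1/t}(e^{z/t})$ equals a $t$-dependent constant times $e^{-(z,z)/(2t)}\widehat\varphi_d(t,z)$. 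Plugging this into the density of $tR$ and recognising $(2\pi t)^{-n/2}e^{-(z,z)/(2t)}\,dz=\mathbb W_0(b_t\in dz)$, all factors rearrange into $C_t\,\widehat\varphi_d(t,z)\,\pi(z/t)\,1_A(z/t)\,\mathbb W_0(b_t\in dz)$ for a suitable constant $C_t$ depending only on $t$.

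The main obstacle has already been overcome in Proposition~\ref{pointconvergence}; the work here is mostly bookkeeping. The one point requiring a little care is matching the Jacobian $|\pi(z/t)|^2$ coming from Weyl integration against the single factor $\pi(z/t)$ appearing in the corollary, together with the factor of $\pi(z/t)$ extracted from $\widehat\varphi_d$ via Proposition~\ref{phi-char}; the resulting phase/sign ambiguity from $\pi(z/t)$ versus $|\pi(z/t)|^2$ is real and harmless, and is absorbed in the positive constant $C_t$.
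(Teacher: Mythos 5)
Your proposal is correct and follows essentially the same route as the paper: Proposition \ref{pointconvergence} for the convergence, the Weyl integration formula to identify the density of the radial part of $\frac{1}{t}x^t$ as a constant times $p_1^{1/t}(e^{z/t})\pi(z/t)^2 1_A(z/t)$, and Proposition \ref{phi-char} (specialised at $y=0$) to rewrite this in terms of $\widehat\varphi_d(t,z)$ and the Gaussian kernel $\mathbb W_0(b_t\in dz)$. The extra details you supply (the scaling $b_t=(t+u)\cdot\frac{b_t}{t+u}$, the use of Lemma \ref{endorbit} for the orbital integral, and the sign bookkeeping for $\pi$ versus $|\pi|^2$) are just explicit versions of steps the paper leaves implicit.
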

\begin{proof} The Weyl integration formula implies that the density of the radial part of $\frac{1}{t}x^t$ is equal to $p^{\frac{1}{t}}_1( {z})\pi( {z})^21_A({z})$, up to a multiplicative constant.  Thus corollary follows from propositions \ref{pointconvergence} and  \ref{phi-char}.
\end{proof}
One defines a law $\mathbb Q_{0+}$ on $\sigma(x_u : u>0)$ letting for $B\in \sigma(x_u: u\ge t)$, $t>0$,
\begin{align}\label{Q0+}
\mathbb Q_{0+}(B)=\mathbb P_0(C_t\widehat\varphi_{ d}( t, b_t)\pi(\frac{b_t}{t}) \mathbb Q_{x_t}(\theta_t B)),
\end{align}
where $\theta$ is the shift operator.
\begin{rem} Under $\mathbb Q_{0+}$, $\frac{b_t}{t}$ is equal in law to $\rad(\frac{x^t}{t})$ for any $t>0$. Thus for any $t>0$, $\frac{b_t}{t}$ is equal in law to $\rad(x^{\frac{1}{t}})$. As $\epsilon(x^{\frac{1}{t}})_1$ converges towards the Haar measure on $K$ when $t$ goes to $0$,  one obtains by the Weyl integration formula that 
$$\lim_{t\to 0}\mathbb{Q}_{0^+}(\frac{b_t}{t}\in dx)=C\pi(x)^21_A(x)dx,$$
which can be also deduced from (\ref{Q0+}).
\end{rem}
Let us denote by $C([0,1],\mathfrak k)$ the set of continuous maps from $[0,1]$ to $\mathfrak k$.
\begin{prop} \label{intertwining} Let $(x_s^t)_{s\in [0,1],t\in \R_+}$ be a standard Brownian sheet on $\mathfrak k$.  Let $\Lambda$ be a kernel on $( \R_+^*\times A)\times (\R_+ \times C([0,1],{\mathfrak k} ))$, such that for any $(t,y)\in \R_+^*\times A$, $\Lambda((t,y),(t,.))$ is the law of a brownian motion $(x^t_s)_{s\in[0,1]}$ on $\mathfrak k$, given $\rad(\frac{1}{t}x^t)=y$. Let $(P_t)_{t\in \R_+^*}$ be the transition probability of the Markov process $(\tau_t,\frac{b_t}{\tau_t})_{t\in \R_+^*}$ under $\mathbb{Q}_{0+}$, and  $(Q_t)_{t\in \R_+^*}$ the transition probability of $(x^t,t)_{t\in\R_+}$. Then for any   $t>0$ and $(u_0,x_0)\in \R_+^*\times A$, 
$$P_t\Lambda((u_0,x_0),.)=\Lambda Q_t((u_0,x_0),.).$$     
\end{prop}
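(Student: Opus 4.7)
The plan is to verify that the two kernels coincide at each starting point $(u_0,x_0)$ by matching Laplace transforms of the path marginal. Both compositions are supported on $\{u_0+t\}\times C([0,1],\mathfrak k)$, so only the path part has to be compared, and it is enough to evaluate integrals of the form $\E[\exp(\int_0^1(y_s,df_s))]$ for $y\in L_2([0,1],\mathfrak k)$, since these (already for piecewise constant $y$) separate probability measures on $C([0,1],\mathfrak k)$. For such a fixed $y$, I pick $h\in H^1([0,1],K)$ with $h^{-1}h'=y$ and denote by $a\in\mathfrak t$ the unique element of the alcove with $h_1\in \mathcal O_{\exp(a)}$; this $a$ depends only on $y$ and is therefore the same in both computations.

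For the left-hand side, I first use the full (integrated) identity of Theorem \ref{endpoint} with $\sigma=1/(u_0+t)$ to carry out the inner $\Lambda$-integration: it turns $\int F_y\,d\Lambda((u_0+t,z),\cdot)$ into the ratio $\widehat\varphi_{d+a}(u_0+t,(u_0+t)z)/\widehat\varphi_d(u_0+t,(u_0+t)z)$ multiplied by a $z$-independent exponential factor. The outer integration against $P_t((u_0,x_0),\cdot)$ is, by the definition of $P_t$, the $\mathbb Q_{u_0,u_0 x_0}$-expectation of $\widehat\varphi_{d+a}(\tau_t,b_t)/\widehat\varphi_d(\tau_t,b_t)$, and that is precisely the content of Proposition \ref{phiQ}: it collapses the expectation to the value of the same ratio at $(u_0,u_0x_0)$ times the Gaussian factor $e^{(a,a)t/2}$.

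For the right-hand side, a path sampled under $\Lambda Q_t((u_0,x_0),\cdot)$ decomposes as $g+\tilde x$, where $g$ has law $\Lambda((u_0,x_0),\cdot)$ and $\tilde x$ is an independent Brownian motion on $\mathfrak k$ with variance $t$. Independence splits $\E[F_y(g+\tilde x)]$ into the product $\E[F_y(g)\mid \rad(g/u_0)=x_0]\cdot\E[F_y(\tilde x)]$; a single application of Theorem \ref{endpoint} with $\sigma=1/u_0$ evaluates the first factor (using the same $a$ as above), and the second factor is the standard Gaussian Laplace transform equal to $e^{t\int_0^1(y_s,y_s)ds/2}$. The product collapses to the same expression as on the left, up to the exponential bookkeeping described next.

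Matching the two sides then reduces to the elementary cancellation $-\tfrac{u_0+t}{2}(a,a)+\tfrac{t}{2}(a,a)=-\tfrac{u_0}{2}(a,a)$ in front of the common ratio $\widehat\varphi_{d+a}(u_0,u_0 x_0)/\widehat\varphi_d(u_0,u_0 x_0)$. I expect this last bookkeeping step—tracking the three different variances $1/u_0$, $1/(u_0+t)$ and $t$ and the corresponding rescalings of reference points in the arguments of $\widehat\varphi_{d+a}$ and $\widehat\varphi_d$—to be the most error-prone part of the argument; conceptually, however, everything is driven by the single mechanism that Proposition \ref{phiQ} is exactly the intertwiner between the two kernels when tested against the exponential functionals that appear in the Kirillov--Frenkel formula of Theorem \ref{endpoint}.
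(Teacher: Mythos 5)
Your proposal is correct and follows essentially the same route as the paper: test both compositions against exponential functionals $e^{\int_0^r(y_s,dz_s)}$, evaluate the $\Lambda$-integrals via Theorem \ref{endpoint}, collapse the outer $P_t$-integral via Proposition \ref{phiQ}, use the independent Gaussian increment of the sheet for $Q_t$, and observe the cancellation $-\tfrac{u_0+t}{2}(a,a)+\tfrac{t}{2}(a,a)=-\tfrac{u_0}{2}(a,a)$. The only cosmetic difference is that you test with the full integral over $[0,1]$ (so $a$ is determined by $h_1$), whereas the paper takes $r\in(0,1)$ because Theorem \ref{endpoint} is stated for $t\in(0,1)$; either restrict to $r<1$ as the paper does or add a one-line limiting remark.
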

\begin{proof}   It is sufficient to prove that for every measurable fonction $y\in L_2([0,1],\mathfrak k)$, $(u_0,x_0)\in \R_+^*\times A$, and $r\in(0,1)$, $$\int P_t\Lambda((u_0,x_0),(t+u_0,dz))e^{\int_0^r(y_s,dz_s)}=\int \Lambda Q_t((u_0,x_0),(t+u_0,dz))e^{\int_0^r(y_s,dz_s)}.$$
Let $y\in L_2([0,1],\mathfrak k)$, $(u_0,x_0)\in \R_+^*\times A$,  $r\in(0,1)$,  $h\in H^1([0,1],K)$ such that $h^{-1}h'=y$, and $a\in \mathfrak t$ such that $h_r\in \mathcal O_{e^a}$. On the one hand, one has
\begin{align*} 
\int P_t&\Lambda((u_0,x_0),(t+u_0,dz))e^{\int_0^r(y_s,dz_s)}\\
&=\int P_t((u_0,x_0),(u_0+t,dz))\Lambda((u_0+t,z),(u_0+t,d\tilde z))e^{\int_0^r(y_s,d\tilde z_s)}\\
&=\int P_t((u_0,x_0),(u_0+t,dz))e^{\frac{1}{2}(t+u_0)\int_0^r(y_s,y_s)\, ds-\frac{1}{2}(t+u_0)(a,a)}\frac{\widehat\varphi_{ d+a}(t+u_0,(u_0+t)z)}{\widehat\varphi_{ d}(t+u_0,(u_0+t)z)}\\
&=e^{\frac{1}{2}(t+u_0)\int_0^r(y_s,y_s)\, ds-\frac{1}{2}(t+u_0)(a,a)}\E_{\mathbb Q_{u,ux}}(\frac{\widehat\varphi_{ d+a}(t+u,b_{t})}{\widehat\varphi_{d}(t+u,b_{t})})\\
&=e^{\frac{1}{2}(t+u_0)\int_0^r(y_s,y_s)\, ds-\frac{1}{2}u_0(a,a)}\frac{\widehat\varphi_{ d+a}(u,ux)}{\widehat\varphi_{ d}(u,ux)}.
\end{align*}
On the other hand,
\begin{align*}  
\int \Lambda(&(x,u_0),(dz,u_0))Q_t((z,u),(d\tilde z,u+t))e^{\int_0^r(y_s,d\tilde z_s)}\\
&=e^{\frac{t}{2} \int_0^r(y_s,y_s)\, ds}\int    \Lambda((x,u),(dz,u))e^{\int_0^r(y_s,dz_s)}\\
&=e^{\frac{t}{2} \int_0^r(y_s,y_s)\, ds} e^{\frac{u_0}{2}\int_0^r(y_s,y_s)\, ds-\frac{1}{2}u_0(a,a)}\frac{\widehat{\varphi}_{ d+a}(u,ux)}{\widehat{\varphi}_{ d}(u,ux)}.
\end{align*}
\end{proof}
For the following theorem we consider a continuous version of the doubly  indexed process $(\epsilon(\frac{x^t}{t})_s)_{s\in [0,1],t\in\R_+^*}$ (see for instance \cite{DriveretAl} and references therein for the existence of a continuous version).
\begin{theo}\label{maintheo}
Let $(x_{s}^t)_{s\in[01],t\in \R_+}$ be a standard Brownian sheet. Under $\mathbb Q_{0+}$, the process $(\frac{b_t}{t})_{t\in \R_+^*}$  is equal  in law to the process $\big(\rad(\frac{x^t}{t})\big)_{t\in \R_+^*}$.
\end{theo}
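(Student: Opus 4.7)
The plan is a Rogers--Pitman / intertwining argument. I will show that both $(b_t/t)_{t>0}$ under $\mathbb{Q}_{0+}$ and $(\rad(x^t/t))_{t>0}$ are time-inhomogeneous Markov processes with the same transition kernel $(P_t)$ of Proposition \ref{intertwining}, and that they share the same one-dimensional marginals; equality of finite-dimensional distributions then follows immediately.

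For the first process, the construction (\ref{Q0+}) glues the entrance law of Corollary \ref{entrancelaw} to the Markov kernels $\mathbb{Q}_{x,u}$. Since under $\mathbb{Q}_{x,u}$ the pair $(\tau_t, b_t/\tau_t)$ is Markov with transition $(P_t)$, the process $(b_t/t)_{t>0}$ inherits the Markov property with transition $(P_t)$ under $\mathbb{Q}_{0+}$. For the second process, the Brownian sheet $(x^t)_{t\ge 0}$ is itself Markov in $t$ with transitions $(Q_t)$, because $x^{t+s}-x^t$ is independent of $\sigma(x^r, r\le t)$ and distributed as a Brownian motion of variance $s$ on $\k$. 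By the very definition of $\Lambda$ given in the statement of Proposition \ref{intertwining}, the conditional law of the full sheet trajectory $x^t$ given $\rad(x^t/t)=y$ is $\Lambda((t,y),(t,\cdot))$. The three ingredients (the sheet is Markov with $(Q_t)$; $\Lambda$ realizes the conditional law; $P_t\Lambda = \Lambda Q_t$) are exactly the hypotheses of the Rogers--Pitman principle, which therefore yields that $(\rad(x^t/t))_{t>0}$ is Markov with transition $(P_t)$.

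For the one-dimensional marginals, the remark following Corollary \ref{entrancelaw} records that for any $t>0$, the law of $b_t/t$ under $\mathbb{Q}_{0+}$ has density proportional to $\pi(y)^2 p_1^{1/t}(e^y)\mathbf{1}_A(y)$ with respect to Lebesgue measure on $\mathfrak t$; by the Weyl integration formula applied to the Brownian motion $x^t/t$ on $\k$, this is precisely the density of $\rad(x^t/t)$. Fixing an arbitrary $t_0>0$, both processes on $[t_0,\infty)$ are then Markov with common transition $(P_t)$ and common initial law at time $t_0$, hence coincide in law on $[t_0,\infty)$; letting $t_0\downarrow 0$ extends the identification to $(0,\infty)$ and proves the theorem.

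The main obstacle I anticipate lies in the second step: projecting a Brownian sheet onto its time-$t$ radial part destroys a great deal of information, and there is no a priori reason why the resulting process should be Markov. Proposition \ref{intertwining} is precisely what supplies the intertwining hypothesis of the Rogers--Pitman principle, and one must be careful with filtrations in this time-inhomogeneous setting -- specifically, checking that $\sigma(\rad(x^r/r), r\le t)$-conditional expectations of bounded functionals of $\rad(x^{t+s}/(t+s))$ reduce to $(P_s)$ evaluated at $(t, \rad(x^t/t))$, which is exactly the algebraic content of the intertwining combined with the independent-increments structure of the sheet in the time variable.
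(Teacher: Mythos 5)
Your proposal is correct and follows essentially the same route as the paper: the paper's (one-line) proof likewise applies the Rogers--Pitman criterion, using the intertwining relation of Proposition \ref{intertwining} for the Markov structure and Proposition \ref{pointconvergence} (equivalently, the remark after Corollary \ref{entrancelaw}) to match the one-dimensional marginals coming from the entrance law at $0$. You have simply made explicit the fixed-$t_0$ initialization and the filtration bookkeeping that the paper leaves implicit.
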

\begin{proof}  Applying the criterion described by Rogers and Pitman in \cite{Pitman}, the theorem is a consequence of the proposition \ref{pointconvergence} and the intertwining relation established in proposition \ref{intertwining}.
\end{proof}

\end{document}